\tikzstyle{poly}=[regular polygon, draw, line width=2, blue!60]
\tikzstyle{minipoly}=[regular polygon, draw, line width=1.2, blue!60]
\tikzstyle{v}=[circle, inner sep=1.5, fill=white, draw=gray, thick]
\tikzstyle{v0}=[circle, inner sep=1, fill=white, draw=gray]
\tikzstyle{path}=[draw, line width=1.2, color=black]
\tikzstyle{pathlight}=[draw, line width=1, dotted, color=lightgray]
\numberwithin{equation}{section}
\newtheorem{theorem}[equation]{Theorem}
\newtheorem{corollary}[equation]{Corollary}
\newtheorem{lemma}[equation]{Lemma}
\newcommand{\drawbranch}[3]{%
	\def\x{#1}
	\def\y{#2}
	\def\sf{#3}
	\draw[scale=\sf,very thick,blue] (\x,\y) node[dot]{} -- (\x-.6,\y-.8) node[dot]{}; 
	\draw[scale=\sf,very thick,blue] (\x,\y) -- (\x+.6,\y-.8) node[dot]{};
}
\def\B{\mathcal{B}}
\def\M{\mathcal{M}}
\def\ST{\mathcal{ST}}
\def\SP{\mathcal{SP}^\wedge}
\def\S{\mathcal{S}}
\def\T{\mathcal{T}}
\def\Y{\mathscr{Y}}
\def\D{\mathsf{D}}
\def\E{\mathsf{E}}
\def\N {\mathsf{N}}
\def\R{\rule[-1ex]{0ex}{3.6ex}}
\title{Schr\"oder Coloring and Applications}
\author{Daniel Birmajer}
\address{Nazareth College\\ 4245 East Ave.\\ Rochester, NY 14618}
\author{Juan B. Gil}
\address{Penn State Altoona\\ 3000 Ivyside Park\\ Altoona, PA 16601}
\author{Juan D. Gil}
\address{Massachusetts Institute of Technology\\ Cambridge, MA 02139}
\author{Michael D. Weiner}
\address{Penn State Altoona\\ 3000 Ivyside Park\\ Altoona, PA 16601}
\begin{document}
\maketitle

\begin{abstract}
We present several bijections, in terms of combinatorial objects counted by the Schr\"oder numbers, that are then used (via coloring) for the construction and enumeration of rational Schr\"oder paths with integer slope, ordered rooted trees, and simple rooted outerplanar maps. On the other hand, we derive partial Bell polynomial identities for the little and large Schr\"oder numbers, which allow us to obtain explicit enumeration formulas.
\end{abstract}

\section{Introduction}
 
This paper focuses on the little Schr\"oder numbers
\begin{equation} \label{eq:Schroeder} 
	1, 1, 3, 11, 45, 197, 903, 4279, 20793, 103049,\dots,
\end{equation}
and the role they play for coloring certain combinatorial structures. Consistent with the OEIS, we denote the above sequence by $(s_n)_{n\in\mathbb{N}_0}$. These numbers appeared in a paper by Ernst Schr\"oder \cite{Schr1870} from 1870, where he discusses four combinatorial problems regarding parenthesizations. Problem II in \cite{Schr1870} lead to \eqref{eq:Schroeder}, and it is now well known that $(s_n)_{n\in\mathbb{N}_0}$ enumerates a wide variety of objects, see \cite[A001003]{oeis}, \cite{Stan97}. 

Here we rely on the following three interpretations for $s_n$ with $n\ge 0$:

\begin{itemize}
\item[({\sc path})] Number of Schr\"oder paths\footnote{Lattice paths with steps $\E=(1,0)$, $\N=(0,1)$, and $\D=(1,1)$, staying weakly above $y=x$.} from $(0,0)$ to $(n,n)$ with no $\D$-steps on $y=x$.
\item[({\sc tree})] Number of ordered trees with no vertex of outdegree 1 and having $n+1$ leaves. 
\item[({\sc poly})] Number of dissections of a convex $(n+2)$-gon by nonintersecting diagonals. 
\end{itemize}

Our main computational result is Theorem~\ref{thm:BellLittleSchroeder}, where we give an explicit formula for the evaluation of partial Bell polynomials at $s_0, s_1, s_2, \dots$. Our proof relies on a combinatorial argument that uses integer compositions and ({\sc path}). In addition, we give a similar formula for when $(s_n)_{n\in\mathbb{N}_0}$ is replaced by the sequence $(r_n)_{n\in\mathbb{N}_0}$ of large Schr\"oder numbers $1, 2, 6, 22, 90,\dots$, see \cite[A006318]{oeis}. These formulas are particularly beneficial when dealing with Bell transformations, as defined in \cite{BGW18}. 

In Section~\ref{sec:FussSchroeder}, we consider the set $\S_n(\alpha)$ of rational Schr\"oder paths with slope $\alpha\in\mathbb{N}$, i.e., lattice paths from $(0,0)$ to $(n,\alpha n)$ with steps $\E$, $\N$, and $\D$, that stay weakly above the line $y=\alpha x$. Our main contribution here is a bijective map that provides an algorithm to generate rational Schr\"oder paths using classical Schr\"oder paths as building blocks. As a consequence, we obtain a formula for the enumeration of $\S_n(\alpha)$ by the number of building blocks used for their construction. In the special case when these paths contain the maximal number of blocks of the form $\D$ or $\N\E$, our formula leads to several sequences listed in the OEIS \cite{oeis}, see Table~\ref{tab:paths_k=n}.

Section~\ref{sec:SchroederTrees} deals with ordered rooted trees. We consider ordered trees with no vertex of outdegree 1, like in ({\sc tree}), and use them to construct ordered rooted trees with a given number of generators.\footnote{A {\em generator} is a leaf or a node with only one child.} This is achieved by means of a Dyck path insertion algorithm described in the proof of Theorem~\ref{thm:dyck2tree}. As a corollary, we obtain a formula for the number of ordered rooted trees with $n$ generators having a prescribed number of nodes of outdegree 1.

Finally, in Section~\ref{sec:OuterplanarMaps}, we make a brief excursion into the enumeration of outerplanar maps, cf.\ \cite{BGH05, GN17}. We view polygon dissections as biconnected simple outerplanar maps and use ({\sc poly}), together with certain colored Dyck paths, to generate and count simple outerplanar maps by the number of their biconnected components.

All of our proofs rely on explicit combinatorial bijections that involve building blocks counted by the Schr\"oder numbers. The three applications we discussed in this paper represent just a glimpse of the possible ways Schr\"oder-type objects may be used to construct and enumerate more complex combinatorial classes. For example, $s_n$ also counts indecomposable permutations that avoid the patterns $2413$ and $3142$, and for $n\ge 1$, $s_n$ gives the number of increasing tableaux of shape $(n,n)$, cf.\ \cite{oeis}. 

In summary, the content of Section~\ref{sec:BellSchroeder} provides a toolbox for studying a broad family of sequence transformations of the little and large Schr\"oder numbers.

\section{Bell transforms of little Schr\"oder numbers}
\label{sec:BellSchroeder}

In \cite{BGW18}, three of the authors introduced the following family of sequence transformations defined via partial Bell polynomials. Let $a$, $b$, $c$, $d$ be fixed numbers. Given $x=(x_n)_{n\in\mathbb{N}}$, its Bell transform $y=\Y_{a,b,c,d}(x)$ is the sequence defined by
\begin{equation} \label{eq:transform}
   y_n=\sum_{k=1}^n  \frac{1}{n!}
   \bigg[\prod_{j=1}^{k-1}{(an+bk+cj+d)}\bigg] B_{n,k}(1!x_1,2!x_2,\dots) \text{ for } n\ge 1,
\end{equation}
where $B_{n,k}$ denotes the $(n, k)$-th (exponential) partial Bell polynomial. 

For $0\le k\le n$, we have
\begin{equation*}
 B_{n,k}(z_1,\dots, z_{n-k+1})
      = \!\!\sum_{\alpha\in\pi(n,k)}\frac{n!}{\alpha_1!\alpha_2!\cdots}\left(\frac{z_1}{1!}\right)^{\alpha_1}
      \left(\frac{z_2}{2!}\right)^{\alpha_2}\cdots,
\end{equation*}
where $\pi(n,k)$ denotes the set of multi-indices $\alpha\in\mathbb{N}_0^{n-k+1}$ such that
\[  \alpha_1+\alpha_2+\dots=k \;\text{ and }\; \alpha_1+2\alpha_2+3\alpha_3+\dots=n. \]
The polynomial $B_{n,k}$ contains as many monomials as the number of partitions of $[n]=\{1,\dots,n\}$ into $k$ parts. 

The definition of $\Y_{a,b,c,d}(x)$ suggests the need for finding a closed formula for $B_{n,k}(1!x_1,2!x_2,\dots)$, which in most cases is a challenging task. In this section, we will use a combinatorial argument (via colored compositions) to derive a formula for $B_{n,k}(1!s_0,2!s_1,\dots)$, where $(s_n)_{n\in\mathbb{N}_0}$ is the sequence of little Schr\"oder numbers.

Let $\mathcal{CS}_{n,k}$ be the set of compositions of $n$ with exactly $k$ parts such that part $j$ can take on $s_{j-1}$ colors. Using a formula\footnote{Written here in terms of Bell polynomials.} by Hoggatt and Lind \cite{HL68}, we get

\begin{equation} \label{eq:schroederCompositions}
  \#\mathcal{CS}_{n,k} = \frac{k!}{n!} B_{n,k}(1!s_0,2!s_1,3!s_2,\dots).
\end{equation}

\begin{lemma} \label{lem:schroederComp2Paths}
There is a bijection between the elements of $\mathcal{CS}_{n+k,k}$ and the set of Schr\"oder paths from $(0,0)$ to $(n+k-1,n+k-1)$ with exactly $k-1$ diagonal steps on the line $y=x$.  
\end{lemma}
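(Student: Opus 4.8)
The plan is to cut a Schr\"oder path at its $\D$-steps lying on $y=x$ and to recognize the resulting pieces as the ``colors'' of a composition, using interpretation ({\sc path}). To set this up, I would first isolate the relevant building blocks: call a Schr\"oder path from $(0,0)$ to $(m,m)$ with no $\D$-step on $y=x$ a \emph{block of size $m+1$}; by ({\sc path}) there are exactly $s_m$ blocks of size $m+1$, the unique block of size $1$ being the empty path ($s_0=1$). With this terminology, an element of $\mathcal{CS}_{n+k,k}$ is precisely a sequence $(B_1,\dots,B_k)$ of blocks whose sizes $a_1,\dots,a_k$ satisfy $a_1+\cdots+a_k=n+k$: the sizes are the parts of the composition, and a part of value $a_i$ carries $s_{a_i-1}$ colors, one for each block of that size.

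Next I would describe the map from paths to colored compositions. Given a Schr\"oder path $P$ from $(0,0)$ to $(n+k-1,n+k-1)$ with exactly $k-1$ $\D$-steps on $y=x$, cut $P$ immediately before and after each of these steps, so that $P=Q_1\,\D\,Q_2\,\D\cdots\D\,Q_k$ where the displayed $\D$'s are exactly the diagonal steps of $P$ on $y=x$. Each $Q_i$ then begins and ends on $y=x$ (its endpoints are $(0,0)$, $(n+k-1,n+k-1)$, or endpoints of one of the distinguished $\D$-steps), stays weakly above $y=x$, and contains no $\D$-step on $y=x$ (such a step would be one of the cut points, hence not interior to $Q_i$). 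After translation to the origin, $Q_i$ is therefore a block, say of size $a_i=m_i+1$ with $Q_i$ ending at $(m_i,m_i)$. Adding up lengths along the diagonal gives $\sum_{i=1}^k m_i+(k-1)=n+k-1$, i.e.\ $a_1+\cdots+a_k=n+k$, so $(Q_1,\dots,Q_k)$ is a valid element of $\mathcal{CS}_{n+k,k}$.

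The inverse map glues blocks back together: given $(B_1,\dots,B_k)\in\mathcal{CS}_{n+k,k}$ with sizes $a_1,\dots,a_k$, form $B_1\,\D\,B_2\,\D\cdots\D\,B_k$, translating each $B_i$ so that everything fits. Since each $B_i$ begins and ends on the diagonal, every inserted $\D$ lies on $y=x$ and no $B_i$ contributes a diagonal $\D$-step, so the resulting path has exactly $k-1$ $\D$-steps on $y=x$; and it ends at $(n+k-1,n+k-1)$ because $\sum_{i=1}^k(a_i-1)+(k-1)=n+k-1$. Cutting and gluing are manifestly mutually inverse, which gives the bijection. I expect no genuine difficulty here beyond the bookkeeping: one must check that \emph{all} $k-1$ diagonal $\D$-steps of $P$ are taken as cut points (so the pieces are genuinely block-like), track the index shift between a part of value $a_i$ and a block ending at $(a_i-1,a_i-1)$, and allow the degenerate size-$1$ blocks (empty paths) arising from consecutive or boundary diagonal $\D$-steps.
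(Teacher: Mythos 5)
Your proposal is correct and follows essentially the same route as the paper: cut the path at its $k-1$ diagonal steps on $y=x$, recognize each resulting piece (possibly empty) as one of the $s_{m}$ Schr\"oder paths with no $\D$-step on the diagonal via ({\sc path}), and read off the composition of $n+k$ from the pieces' sizes, with gluing as the inverse. Your write-up just makes explicit the bookkeeping (sizes $a_i=m_i+1$, the check that no interior $\D$-step lies on $y=x$) that the paper conveys by example.
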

\begin{proof}
Given a Schr\"oder path $P$ from $(0,0)$ to $(n+k-1,n+k-1)$ with $k-1$ diagonal steps on the line $y=x$, use these diagonal steps to split the paths into $k$ Schr\"oder subpaths $P_1,P_2,\dots,P_k$ with no $\D$-steps on the line $y=x$, including the trivial path of length 0 (i.e., a single lattice point). For example, for $n=5$ and $k=4$, we consider the path

\begin{center}
\tikz[scale=0.5]{\draw[pathlight] (0,0) -- (8,8);
	\draw[step=1,gray!60] (0,0) grid (8,8); 
	\draw[path] (0,0) -- (0,1) -- (1,2) -- (2,2) -- (4,4) -- (4,7) -- (7,7) -- (8,8);
	\foreach \i in {2,3,7} {\draw[path,orange] (\i,\i) -- (\i+1,\i+1);}
	\foreach \i in {3,8} {\fill (\i,\i) circle(0.12);}}
\end{center}

\noindent
and use the three diagonal steps (in orange) to split the path into the four subpaths in black. The first path covers a segment on $y=x$ with 3 lattice points, and the third path covers a segment with 4 lattice points. The second and fourth paths are trivial. With the above path, we then associate the composition $9 = 3+1+4+1$, where part 3 is labeled by the path $\N\D\E$ and part 4 is labeled by the path $\N\N\N\E\E\E$.

In general, if $P_j$ covers a segment on $y=x$ with $i_j$ lattice points, we let $\varphi(P_j)=i_j$ and define $\varphi(P)$ to be the composition
\[ n+k = i_1+i_2+\dots+i_k, \]
where part $i_j$ is labeled by $P_j$. The map $\varphi$ is clearly bijective.
\end{proof}

As a consequence of Lemma~\ref{lem:schroederComp2Paths}, ({\sc path}), and identity \eqref{eq:schroederCompositions}, we get
\begin{equation} \label{eq:schroederConvolution}
  \frac{k!}{(n+k)!} B_{n+k,k}(1!s_0,2!s_1,\dots) = \sum\limits_{m_1+\dots+m_k=n} s_{m_1}\cdots s_{m_k}.
\end{equation}

\begin{theorem} \label{thm:BellLittleSchroeder}
For $1\le k<n$, we have
\begin{equation}
  \label{eq:schroederBell}
  B_{n,k}(1!s_0,2!s_1,3!s_2,\dots) 
  = \frac{n!}{(k-1)!} \sum_{j=1}^{n-k}\frac{1}{j}\binom{n-k-1}{j-1}\binom{n+j-1}{j-1}.
\end{equation}
\end{theorem}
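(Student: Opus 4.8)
The plan is to combine identity~\eqref{eq:schroederConvolution} with a short generating-function computation via Lagrange inversion. Write $S(x)=\sum_{i\ge 0}s_ix^i$ for the ordinary generating function of the little Schr\"oder numbers. Replacing $n$ by $n-k$ in \eqref{eq:schroederConvolution}, the quantity to evaluate is
\[
  \frac{k!}{n!}\,B_{n,k}(1!s_0,2!s_1,\dots)=\sum_{m_1+\dots+m_k=n-k}s_{m_1}\cdots s_{m_k}=[x^{\,n-k}]\,S(x)^k ,
\]
so, setting $m=n-k\ge 1$ (recall $k<n$), it suffices to prove $[x^{m}]S(x)^k=k\sum_{j=1}^{m}\frac1j\binom{m-1}{j-1}\binom{m+k+j-1}{j-1}$; the theorem then follows on multiplying by $n!/k!$ and using $m+k+j-1=n+j-1$.

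For the generating-function step I would first record that $S$ satisfies $2xS^2-(1+x)S+1=0$ (a standard identity; it can also be read off from the first-return decomposition of Schr\"oder paths with no $\D$-step on $y=x$). Substituting $w=S-1$ turns this into $w=x(1+w)(1+2w)$, i.e.\ $w=x\,\phi(w)$ with $\phi(\lambda)=(1+\lambda)(1+2\lambda)$, and $w$ has no constant term. Applying Lagrange inversion with $H(\lambda)=(1+\lambda)^k$, so that $S^k=H(w)$, gives for $m\ge 1$
\[
  [x^m]S^k=[x^m]H(w)=\frac1m[\lambda^{m-1}]\,H'(\lambda)\,\phi(\lambda)^m
  =\frac{k}{m}\,[\lambda^{m-1}]\,(1+\lambda)^{m+k-1}(1+2\lambda)^m .
\]

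It then remains to extract this coefficient in exactly the stated shape. The key manipulation is to write $1+2\lambda=(1+\lambda)+\lambda$, so that $(1+2\lambda)^m=\sum_{i=0}^m\binom{m}{i}(1+\lambda)^{m-i}\lambda^i$ and hence $(1+\lambda)^{m+k-1}(1+2\lambda)^m=\sum_i\binom{m}{i}(1+\lambda)^{2m+k-1-i}\lambda^i$. Taking $[\lambda^{m-1}]$ yields $[x^m]S^k=\frac{k}{m}\sum_i\binom{m}{i}\binom{2m+k-1-i}{m-1-i}$, the substitution $i\mapsto m-j$ rewrites this as $\frac{k}{m}\sum_{j=1}^m\binom{m}{j}\binom{m+k+j-1}{j-1}$, and finally $\frac1m\binom{m}{j}=\frac1j\binom{m-1}{j-1}$ brings it to the desired form. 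The only mildly delicate points are picking the form of Lagrange inversion that produces $(1+\lambda)^{m+k-1}(1+2\lambda)^m$ directly, and the re-indexing $i\mapsto m-j$ that matches the binomials to \eqref{eq:schroederBell} rather than to an equivalent-looking alternative; the rest is bookkeeping. A purely combinatorial proof is also available --- expand the convolution $\sum s_{m_1}\cdots s_{m_k}$ over colored compositions as in Lemma~\ref{lem:schroederComp2Paths} and reorganize the resulting Schr\"oder paths by a suitable statistic --- but the Lagrange-inversion route is the most economical.
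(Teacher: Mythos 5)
Your proof is correct, and it takes a genuinely different route from the paper's for the key evaluation. Both arguments reduce, via \eqref{eq:schroederConvolution} with $n$ replaced by $n-k$, to computing the $k$-fold convolution $\sum_{m_1+\dots+m_k=n-k}s_{m_1}\cdots s_{m_k}$; the paper evaluates it by substituting the known single-sum formula $s_m=\sum_j \frac1j\binom{m+j}{j-1}\binom{m-1}{j-1}$, rewritten through $B_{m,j}(1!,2!,\dots)$, into a convolution formula from \cite[Section 4]{BGW18}, thus staying entirely inside the Bell-transformation toolkit, whereas you compute $[x^{n-k}]S(x)^k$ directly by Lagrange inversion. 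Your intermediate steps check out: the algebraic equation $2xS^2-(1+x)S+1=0$ is correct (it is equivalent to $R=1+xR+xR^2$ for the large Schr\"oder series $R=2S-1$), the substitution $w=S-1$ does give $w=x(1+w)(1+2w)$, the inversion with $H(\lambda)=(1+\lambda)^k$ yields $\frac{k}{m}[\lambda^{m-1}](1+\lambda)^{m+k-1}(1+2\lambda)^m$, and the expansion $1+2\lambda=(1+\lambda)+\lambda$ followed by $i\mapsto m-j$ and $\frac1m\binom{m}{j}=\frac1j\binom{m-1}{j-1}$ reproduces exactly the paper's intermediate identity $\sum_{m_1+\dots+m_k=m}s_{m_1}\cdots s_{m_k}=\sum_{j=1}^{m}\frac{k}{j}\binom{m-1}{j-1}\binom{m+k+j-1}{j-1}$, hence \eqref{eq:schroederBell} after multiplying by $n!/k!$; the hypothesis $k<n$ guarantees $m=n-k\ge1$ as required for the inversion formula. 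What each approach buys: the paper's proof is shorter granted the external convolution formula of \cite{BGW18}, while yours is self-contained (it does not even presuppose the explicit formula for $s_n$, which it recovers as the case $k=1$) and transfers verbatim to any sequence whose generating function satisfies a similar algebraic equation.
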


\begin{proof}
First of all, note that $s_0=1$, and for $n\ge 1$,
\begin{align*}
 s_n &= \sum_{j=1}^{n} \frac{1}{j} \binom{n+j}{j-1}\binom{n-1}{j-1} \\
 &= \sum_{j=1}^{n}\binom{n+j}{j-1}\frac{(j-1)!}{n!} \frac{n!}{j!}\binom{n-1}{j-1} \\
 &= \sum_{j=1}^{n}\binom{n+j}{j-1}\frac{(j-1)!}{n!} B_{n,j}(1!,2!,\dots).
\end{align*}
Therefore, by a convolution formula given in \cite[Section 4]{BGW18},
\begin{align*}
  \sum_{m_1+\dots+m_k=n} \!\! s_{m_1}\cdots s_{m_k}
  & = k\sum_{j=1}^{n}\binom{n+j+k-1}{j-1}\frac{(j-1)!}{n!} B_{n,j}(1!,2!,\dots) \\
  & = \sum_{j=1}^{n}\frac{k}{j}\binom{n-1}{j-1}\binom{n+j+k-1}{j-1}.
\end{align*}
Now, identity \eqref{eq:schroederConvolution} implies
\[ \frac{k!}{(n+k)!} B_{n+k,k}(1!s_0,2!s_1,\dots) 
   = \sum_{j=1}^{n}\frac{k}{j}\binom{n-1}{j-1}\binom{n+j+k-1}{j-1}, \]
and \eqref{eq:schroederBell} follows by replacing $n$ by $n-k$.
\end{proof}

Using basic properties of the partial Bell polynomials and \eqref{eq:schroederBell}, we get:
\begin{corollary}
If $(r_n)_{n\in\mathbb{N}_0}$ is the sequence of large Schr\"oder numbers $1, 2, 6, 22, \dots$, then for $1\le k<n$, we have
\begin{align*}
  B_{n,k}(1!r_0,& \,2!r_1,3!r_2,\dots) \\
  &= \sum_{\ell=0}^k (-1)^\ell 2^{k-\ell}\binom{n}{\ell} B_{n-\ell,k-\ell}(1!s_0,2!s_1,3!s_2,\dots) \\
  &= \frac{n!}{(k-1)!} \sum_{\ell=0}^k  \sum_{j=1}^{n-k} \frac{(-1)^\ell 2^{k-\ell}}{j}
  \binom{k-1}{\ell}\binom{n-k-1}{j-1}\binom{n-\ell+j-1}{j-1}.
\end{align*}
\end{corollary}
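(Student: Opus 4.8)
The plan is to deduce the corollary from Theorem~\ref{thm:BellLittleSchroeder}: I would first establish the first displayed equality, which expresses $B_{n,k}$ evaluated at the large Schr\"oder numbers in terms of its values at the little Schr\"oder numbers, and then obtain the second line by substituting the closed form \eqref{eq:schroederBell}.

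For the first equality I would exploit the elementary relation $r_0=s_0=1$ and $r_n=2s_n$ for $n\ge 1$; equivalently, the ordinary generating functions $S(t)=\sum_{n\ge0}s_nt^n$ and $R(t)=\sum_{n\ge0}r_nt^n$ satisfy $R(t)=2S(t)-1$. Recall the exponential generating function identity
\[
 \sum_{n\ge k}B_{n,k}(1!x_0,2!x_1,3!x_2,\dots)\frac{t^n}{n!}=\frac{1}{k!}\Bigl(t\sum_{i\ge0}x_it^i\Bigr)^{k},
\]
valid for any sequence $(x_i)_{i\ge0}$. Applying it with $x_i=r_i$, writing $tR(t)=2\,tS(t)-t$, and expanding by the binomial theorem,
\begin{align*}
 \frac{1}{k!}\bigl(tR(t)\bigr)^{k}
 &=\sum_{\ell=0}^{k}\frac{(-1)^{\ell}2^{k-\ell}}{k!}\binom{k}{\ell}\,t^{\ell}\bigl(tS(t)\bigr)^{k-\ell}\\
 &=\sum_{\ell=0}^{k}\frac{(-1)^{\ell}2^{k-\ell}}{\ell!}\,t^{\ell}\sum_{m\ge k-\ell}B_{m,k-\ell}(1!s_0,2!s_1,\dots)\frac{t^m}{m!},
\end{align*}
where I used $\binom{k}{\ell}(k-\ell)!/k!=1/\ell!$ and the same generating function identity for $(s_i)$. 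Comparing the coefficient of $t^n/n!$ on both sides and using $n!/\bigl(\ell!(n-\ell)!\bigr)=\binom n\ell$ gives exactly
\[
 B_{n,k}(1!r_0,2!r_1,3!r_2,\dots)=\sum_{\ell=0}^{k}(-1)^{\ell}2^{k-\ell}\binom n\ell B_{n-\ell,k-\ell}(1!s_0,2!s_1,3!s_2,\dots).
\]
The same identity can be obtained without generating functions from standard properties of $B_{n,k}$: the homogeneity $B_{m,p}(2z_1,2z_2,\dots)=2^pB_{m,p}(z_1,z_2,\dots)$, the evaluation $B_{i,j}(-1,0,0,\dots)=(-1)^i[i{=}j]$, and the addition formula $B_{n,k}(v+u)=\sum_{i,j}\binom ni B_{i,j}(v)B_{n-i,k-j}(u)$ applied with $v_j=-[j{=}1]$ and $u_j=2\cdot j!\,s_{j-1}$, so that $v_j+u_j=j!\,r_{j-1}$.

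For the second equality I would simply insert \eqref{eq:schroederBell} for $B_{n-\ell,k-\ell}(1!s_0,2!s_1,\dots)$. The hypothesis $1\le k-\ell<n-\ell$ of Theorem~\ref{thm:BellLittleSchroeder} holds for every $0\le\ell\le k-1$ because $k<n$, while the term $\ell=k$ contributes $B_{n-k,0}(1!s_0,\dots)=0$ since $n>k$, in agreement with $\binom{k-1}{k}=0$. As $(n-\ell)-(k-\ell)=n-k$, the inner index $j$ still runs over $1\le j\le n-k$ and $\binom{n-k-1}{j-1}$ is unchanged, only the last binomial becoming $\binom{n-\ell+j-1}{j-1}$; finally
\[
 \binom n\ell\cdot\frac{(n-\ell)!}{(k-\ell-1)!}=\frac{n!}{\ell!\,(k-\ell-1)!}=\frac{n!}{(k-1)!}\binom{k-1}{\ell},
\]
which produces the stated double sum.

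The argument is essentially bookkeeping, and I do not expect a serious obstacle; the points needing care are the generating function manipulations (or, on the alternative route, the precise form of the homogeneity and addition rules for $B_{n,k}$) and the verification that every index-shifted instance of Theorem~\ref{thm:BellLittleSchroeder} invoked is legitimate, with the degenerate $\ell=k$ term vanishing.
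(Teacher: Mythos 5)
Your proposal is correct, and it takes the route the paper intends: the paper states this corollary without a written proof, citing only ``basic properties of the partial Bell polynomials'' together with \eqref{eq:schroederBell}, which is precisely your argument via $r_0=s_0$, $r_n=2s_n$ (homogeneity plus the addition formula for $B_{n,k}$, or equivalently the generating-function expansion of $tR(t)=2tS(t)-t$), followed by substitution of \eqref{eq:schroederBell}. Your bookkeeping of the index ranges, including the vanishing $\ell=k$ term matching $\binom{k-1}{k}=0$, is accurate, so there is nothing to add.
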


It is worth noting that some of the most prominent Bell transforms are of the form $\Y_{a,b,-1,1}(x)$ with $a,b\in \mathbb{N}_0$. In such a special case, if $s=(s_n)_{n\in\mathbb{N}_0}$ and $y=\Y_{a,b,-1,1}(s)$, then \eqref{eq:transform} becomes
\begin{equation*}
   y_n=\sum_{k=1}^n  \binom{an+bk}{k-1} \frac{(k-1)!}{n!} B_{n,k}(1!s_0,2!s_1,\dots).
\end{equation*}

We can then use \eqref{eq:schroederBell} to conclude:
\begin{corollary}
If $s=(s_n)_{n\in\mathbb{N}_0}$, then the Bell transform $\Y_{a,b,-1,1}(s)$ is given by
\begin{equation*}
   y_n= \frac{1}{n}\binom{(a+b)n}{n-1} 
   + \sum_{k=1}^{n-1}\sum_{j=1}^{n-k}\frac{1}{j} \binom{an+bk}{k-1}\binom{n-k-1}{j-1}\binom{n+j-1}{j-1}.
\end{equation*}
\end{corollary}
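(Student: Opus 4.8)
The plan is to substitute the closed form from Theorem~\ref{thm:BellLittleSchroeder} directly into the specialized expression for $\Y_{a,b,-1,1}(s)$ displayed just above the statement, after first peeling off the top term $k=n$, which lies outside the range of validity of \eqref{eq:schroederBell}.

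First I would record that, since $s_0=1$, the unique set partition of $[n]$ into $n$ blocks contributes $B_{n,n}(1!s_0,2!s_1,\dots)=(1!s_0)^n=1$. Hence the $k=n$ summand of
\[
  y_n=\sum_{k=1}^n \binom{an+bk}{k-1}\frac{(k-1)!}{n!}\,B_{n,k}(1!s_0,2!s_1,\dots)
\]
equals $\binom{(a+b)n}{n-1}\frac{(n-1)!}{n!}=\frac{1}{n}\binom{(a+b)n}{n-1}$, which is exactly the first term in the asserted formula.

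For the remaining range $1\le k\le n-1$, Theorem~\ref{thm:BellLittleSchroeder} applies and gives $B_{n,k}(1!s_0,2!s_1,\dots)=\frac{n!}{(k-1)!}\sum_{j=1}^{n-k}\frac{1}{j}\binom{n-k-1}{j-1}\binom{n+j-1}{j-1}$. Multiplying by the prefactor $\binom{an+bk}{k-1}\frac{(k-1)!}{n!}$, the factorials cancel and the $k$-th summand collapses to $\binom{an+bk}{k-1}\sum_{j=1}^{n-k}\frac{1}{j}\binom{n-k-1}{j-1}\binom{n+j-1}{j-1}$. Summing over $k=1,\dots,n-1$ then produces the double sum, and combining with the isolated $k=n$ term completes the identity.

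The only point requiring care — and the ``main obstacle,'' such as it is — is the boundary bookkeeping: making sure \eqref{eq:schroederBell} is invoked strictly on $k<n$, and that the excluded $k=n$ term is disposed of separately via $B_{n,n}=s_0^n=1$. Everything else is a direct cancellation of factorials, so no genuine computation is involved.
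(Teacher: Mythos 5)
Your proposal is correct and is essentially the argument the paper intends (the paper simply says ``use \eqref{eq:schroederBell}''): isolate the $k=n$ term via $B_{n,n}(1!s_0,\dots)=s_0^n=1$ to get $\frac{1}{n}\binom{(a+b)n}{n-1}$, and substitute Theorem~\ref{thm:BellLittleSchroeder} for $1\le k\le n-1$, where the factorials cancel. Nothing is missing.
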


\section{Rational Schr\"oder paths}
\label{sec:FussSchroeder}

For $a\in\mathbb{N}$ and $\mathbf{c}=(c_1,c_2,\dots)$ with $c_j\in\mathbb{N}_0$, let us consider the set of Dyck words of semilength $(a+1)n$ created from strings of the form $P_0=``d\,"$ and $P_j=``u^{(a+1)j}d^{j}"$ for $j=1,\ldots,n$, such that each maximal $(a+1)j$-ascent substring $u^{(a+1)j}$ may be colored in $c_j$ different ways. Consistent with the notation used in \cite{BGMW}, we denote this set by $\mathfrak{D}^{\mathbf{c}}_n(a,1)$.

Let $\SP_n$ denote the set of Schr\"oder paths from $(0,0)$ to $(n,n)$ that lie {\em strictly} above the diagonal $y = x$ except for the diagonal path from $(0,0)$ to $(1,1)$. Moreover, let $\pmb{\sigma}$ be the sequence that enumerates $\SP_n$. In particular, $\sigma_1=2$ since the two paths $\D$ and $\N\E$ from $(0,0)$ to $(1,1)$ both fit the above description, and $\sigma_2, \sigma_3, \sigma_4,\dots$, is precisely the sequence of large Schr\"oder numbers $2, 6, 22, 90, 394,\dots$, see \cite[A006318]{oeis}. Note that $\sigma_j = 2s_{j-1}$ for $j\ge 1$.

Recall that $\S_n(\alpha)$ denotes the set of rational Schr\"oder paths with slope $\alpha\in\mathbb{N}$.

\begin{theorem} 
For every $n\in\mathbb{N}$, we have
\begin{equation*}
 \#\S_n(\alpha) = \#\mathfrak{D}^{\pmb{\sigma}}_n(\alpha-1,1).
\end{equation*}
\end{theorem}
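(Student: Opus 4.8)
The plan is to construct an explicit bijection between $\S_n(\alpha)$ and $\mathfrak{D}^{\pmb{\sigma}}_n(\alpha-1,1)$, proceeding through an intermediate normal-form decomposition of a rational Schr\"oder path. First I would record the basic bookkeeping: a path in $\S_n(\alpha)$ runs from $(0,0)$ to $(n,\alpha n)$ with steps $\E=(1,0)$, $\N=(0,1)$, $\D=(1,1)$, staying weakly above $y=\alpha x$. Each $\E$ or $\D$ step advances the $x$-coordinate by $1$, so there are exactly $n$ non-$\N$ steps, and I would index them $1,\dots,n$; these will correspond to the ``blocks'' $P_0,\dots,P_n$ used to build the Dyck word of semilength $(\alpha-1+1)n=\alpha n$. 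The translation dictionary I have in mind is: a Dyck step $u$ has ``height content'' accounting for vertical progress and $d$ for horizontal progress, so that a block $P_j=u^{(a+1)j}d^j$ with $a=\alpha-1$ encodes a chunk of the rational path that makes $j$ horizontal units of progress while rising $\alpha j$ units above the appropriate reference line, and $P_0=d$ encodes a single $\E$-step taken while already sitting on the boundary.

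The key steps, in order, are as follows. (1) Given $Q\in\S_n(\alpha)$, I would scan the $n$ horizontal-progress steps and mark those at which the path touches the line $y=\alpha x$ — these are the analogue of ``$\D$-steps on the diagonal'' in Lemma~\ref{lem:schroederComp2Paths}, and they cut $Q$ into maximal arcs, each of which is a path lying strictly above its own reference diagonal except at its single return point. (2) For each such arc I would show, by a shearing/affine argument, that after subtracting the linear drift $y=\alpha x$ it becomes (a colored version of) an element of $\SP_j$ for the appropriate $j$ — this is exactly where the sequence $\pmb\sigma$ enters, since $\sigma_j=2s_{j-1}$ counts precisely the Schr\"oder paths that are strictly above the diagonal except for the initial $(0,0)\to(1,1)$ step, i.e. the decorations available on a $u^{(a+1)j}$-ascent. (3) I would then read off from the arc decomposition a word in $P_0,P_1,\dots,P_n$ with colors in $\pmb\sigma$, verify it is a genuine Dyck word of semilength $\alpha n$ (the weak-above-$y=\alpha x$ condition for $Q$ translating precisely into the prefix condition $\#u\ge\#d$ throughout), and check the two maps are mutually inverse.

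The main obstacle I anticipate is step (2): making the affine reduction precise so that each arc of a rational Schr\"oder path — which lives between two consecutive contacts with $y=\alpha x$ and uses $\E,\N,\D$ steps — is matched bijectively with a colored $\SP_j$-path and hence with the $c_j=\sigma_j$ coloring attached to the ascent $u^{(a+1)j}$. The subtlety is that the ``strictly above except for one diagonal step'' normalization of $\SP$ has to line up with how the blocks $P_j$ are concatenated (where the boundary is revisited between blocks but not inside them), and the $P_0=d$ blocks, which carry no color, must be slotted in exactly at the lattice steps where $Q$ travels along the line rather than above it. Once this local correspondence is nailed down, globalizing it is routine: concatenation of arcs corresponds to concatenation of the blocks $P_j$, the color counts multiply, and both sides are counted by the same sum $\sum s_{m_1-1}\cdots$, so the identity $\#\S_n(\alpha)=\#\mathfrak{D}^{\pmb\sigma}_n(\alpha-1,1)$ follows. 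I would also remark that this recovers the classical $\alpha=1$ statement and connects to the convolution \eqref{eq:schroederConvolution}, giving an enumeration of $\S_n(\alpha)$ refined by the number of blocks used.
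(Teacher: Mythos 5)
Your step (2) is where the argument breaks down, and the failure is structural rather than technical. Cutting the path at its returns to the line $y=\alpha x$ and matching each primitive arc of horizontal extent $j$ with a single colored ascent $u^{(\alpha)j}$ (i.e.\ with one of the $\sigma_j$ elements of $\SP_j$) cannot work for $\alpha\ge 2$: already for $\alpha=2$ and $j=2$ there are six arcs from $(0,0)$ to $(2,4)$ that touch $y=2x$ only at their endpoints, namely $\N\N\D\D$, $\N\N\D\N\E$, $\N\N\N\D\E$, $\N\N\N\E\D$, $\N\N\N\E\N\E$, $\N\N\N\N\E\E$, while $\sigma_2=2$. Under the correct correspondence a primitive arc matches a primitive \emph{factor} of the Dyck word (between returns to the $x$-axis), and such a factor generally contains several colored blocks $P_j$ interleaved with uncolored blocks $P_0=d$; your scheme leaves no room for the $(\alpha-1)n$ copies of $P_0$ that every word of $\mathfrak{D}^{\pmb{\sigma}}_n(\alpha-1,1)$ must contain. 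Accordingly your closing count $\sum_{j_1+\cdots+j_k=n}\sigma_{j_1}\cdots\sigma_{j_k}$ undercounts the right-hand side: for $n=2$, $\alpha=2$ it gives $\sigma_2+\sigma_1^2=6$, whereas $\#\S_2(\alpha)=\#\mathfrak{D}^{\pmb{\sigma}}_2(1,1)=10$. Relatedly, the proposed shear $(x,y)\mapsto(x,y-\alpha x)$ sends $\E$ to $(1,-\alpha)$ and $\D$ to $(1,1-\alpha)$, so it does not produce a Schr\"oder path; there is no affine reduction of a slope-$\alpha$ arc to an element of $\SP_j$ to be made precise.

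The paper's bijection uses a finer decomposition by chords of slope $1$, not by returns to $y=\alpha x$: walking the path backwards from $(n,\alpha n)$, each exposed $\E$-step determines a slope-$1$ chord back to the path, and the subpath sitting above that chord is \emph{literally} a classical Schr\"oder path in $\SP_j$ (no shearing needed), which becomes the color of an $\alpha j$-ascent followed by $j$ down-steps; each exposed $\D$-step yields an $\alpha$-ascent colored by $\D\in\SP_1$; and each remaining $\N$-step yields one block $P_0=d$. Returns to $y=\alpha x$ then correspond to returns of the Dyck path to the $x$-axis as a consequence of this construction, not as its starting point. To salvage your outline you would have to replace the cut-at-returns step by this slope-$1$-chord decomposition (or independently biject primitive arcs of width $j$ with primitive colored Dyck factors whose ascents total $\alpha j$, which is essentially the theorem itself).
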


\begin{proof}
We will give an explicit bijective map $\xi:\S_n(\alpha)\to \mathfrak{D}^{\pmb{\sigma}}_n(\alpha-1,1)$. 

Given a rational Schr\"oder path $p\in \S_n(\alpha)$, walk the path from $(n, \alpha n)$ to $(0,0)$ and construct a colored Dyck path $q\in\mathfrak{D}^{\pmb{\sigma}}_n(\alpha-1,1)$ as follows:
\begin{itemize}
\item if at a point $(a,b)$ on $p$ followed by a $\D$-step, add an $\alpha$-ascent followed by one down-step to the path $q$, color the ascent by \tikz[baseline=3pt,scale=0.4]{\draw[gray!60] (0,0) rectangle (1,1); \draw[path,blue] (0,0) -- (1,1);}, and move to the point $(a-1,b-1)$ on $p$.
\item if at a point $(a,b)$ followed by an $\E$-step, draw a line of slope 1 from that point to the next point of the form $(a-j,b-j)$ where the line intersects the path $p$. The path $p_j$ between these two points is a Schr\"oder path in $\SP_j$. Add to the path $q$ an $\alpha j$-ascent followed by $j$ down-steps, color that ascent by $p_j$, and move to the point $(a-j,b-j)$ on $p$.
\item if at a point $(a,b)$ followed by an $\N$-step, add a down-step to the path $q$ and move to the point $(a,b-1)$ on $p$. 
\end{itemize}
The process stops when $(0,0)$ has been reached. The path $q=\xi(p)$ created by means of the above algorithm is by construction a Dyck path in $\mathfrak{D}^{\pmb{\sigma}}_n(\alpha-1,1)$. Note that a return to the line $y=\alpha x$ on $p$ corresponds to a return to the $x$-axis on the Dyck path. For an example, see Figure~\ref{fig:schroeder2dyck}.

\begin{figure}[ht]
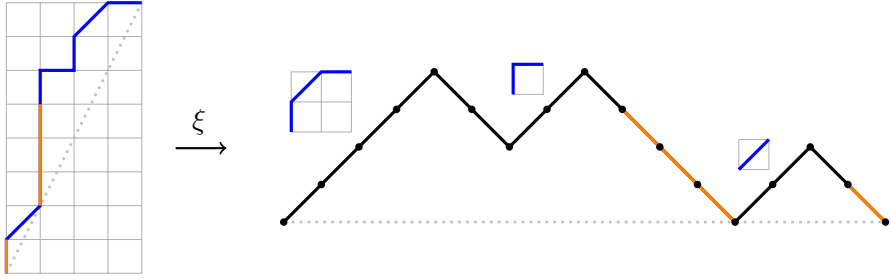

\tikz[scale=0.45]{
 \draw[step=1,gray!60] (0,0) grid (4,8); \draw[pathlight] (0,0) -- (4,8);
 \draw[path,blue] (0,0) -- (0,1) -- (1,2) -- (1,6) -- (2,6) -- (2,7) -- (3,8) -- (4,8);
 \draw[path,orange] (0,0) -- (0,1); \draw[path,orange] (1,2) -- (1,5);
 \draw[->,thick] (5,3.7) -- (6.5,3.7);
 \node[above=1pt] at (5.7,3.7) {\small $\xi$};
}
\tikzstyle{dot}=[circle, inner sep=1, fill=black]
\quad \tikz[scale=0.5,baseline=-20pt]{
 \draw[pathlight] (0,0) -- (16,0);
 \draw[path] (0,0) -- (4,4) -- (6,2) -- (8,4) -- (12, 0) -- (14,2) -- (16,0);
 \draw[path,orange] (9,3) -- (12,0); \draw[path,orange] (15,1) -- (16,0);
 \foreach \i in {0,...,4} {\node[dot] at (\i,\i) {};}
 \foreach \i/\j in {5/3,6/2,7/3,8/4,9/3,10/2,11/1,12/0,13/1,14/2,15/1,16/0} {\node[dot] at (\i,\j) {};}
 \node[above=10pt] at (1,0) 
 {\tikz[scale=0.4]{\draw[gray!60] (0,0) grid (2,2); \draw[path,blue] (0,0) -- (0,1) -- (1,2) -- (2,2);}};
 \node[above=10pt] at (6.5,1) 
 {\tikz[scale=0.4]{\draw[gray!60] (0,0) rectangle (1,1); \draw[path,blue] (0,0) -- (0,1) -- (1,1);}};
 \node[above=10pt] at (12.5,-1) 
 {\tikz[scale=0.4]{\draw[gray!60] (0,0) rectangle (1,1); \draw[path,blue] (0,0) -- (1,1);}}
}
\caption{Example for $\alpha=2$ and $n=4$.}
\label{fig:schroeder2dyck}
\end{figure}

The map $\xi$ is reversible. Given a Dyck path $q\in \mathfrak{D}^{\pmb{\sigma}}_n(\alpha-1,1)$, construct a path $p\in\S_n(\alpha)$ as follows. Walk the Dyck path $q$ from right to left. For every down-step that is not part of a block $P_j=u^{\alpha j}d^{j}$ on $q$, add an $\N$-step to $p$, and for every block $P_j$, add to $p$ the Schr\"oder path labeling the corresponding $\alpha j$-ascent on $P_j$.
\end{proof}

The above bijection provides an algorithm to generate rational Schr\"oder paths in $\S_n(\alpha)$ using classical Schr\"oder paths as building blocks.

By \cite[Theorem~2]{BGMW}, the number of elements in $\mathfrak{D}^{\pmb{\sigma}}_n(\alpha-1,1)$ having exactly $k$ peaks is given by
\begin{align*}
  &\binom{(\alpha-1)n+k}{k-1} \frac{(k-1)!}{n!} B_{n,k}(1!\sigma_1,2!\sigma_2,\dots) \\
  &=\binom{(\alpha-1)n+k}{k-1} \frac{(k-1)!}{n!} B_{n,k}(1!(2s_0),2!(2s_1),\dots) \\
  &=\binom{(\alpha-1)n+k}{k-1} \frac{(k-1)!}{n!} 2^k B_{n,k}(1!s_0,2!s_1,\dots).
\end{align*}

\begin{corollary}
There are 
\begin{equation} \label{eq:n_peaks}
 \frac{2^n}{n}\binom{\alpha n}{n-1} = \frac{2^n}{(\alpha-1)n+1} \binom{\alpha n}{n}
\end{equation}
paths in $\S_n(\alpha)$ containing $n$ paths from $\SP_1$ (i.e., paths of the form $\D$ or $\N\E$).
\end{corollary}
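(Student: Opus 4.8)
The idea is to obtain this as the $k=n$ case of the peak-refined enumeration displayed just above, after translating ``$k$ peaks'' into ``$k$ building blocks from $\SP_1$'' through the bijection $\xi$ of the preceding theorem. Concretely, I would first show that $\xi$ restricts to a bijection between the paths in $\S_n(\alpha)$ containing exactly $n$ pieces from $\SP_1$ and the Dyck words in $\mathfrak{D}^{\pmb\sigma}_n(\alpha-1,1)$ with exactly $n$ peaks, and then substitute $k=n$ into the known formula.

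For the translation step, recall that under $\xi$ the pieces of $p$ lying in $\SP_j$ correspond precisely to the (colored) blocks $P_j=u^{\alpha j}d^j$ of $q=\xi(p)$, with the remaining down-steps of $q$ forming the blocks $P_0=d$. Inside $u^{\alpha j}d^j$ the unique factor $ud$ is the transition from the last $u$ to the first $d$, and every block ends in $d$ while every block begins with $u$ or $d$, so no peak can straddle a block boundary; hence each block $P_j$ with $j\ge 1$ carries exactly one peak of $q$ and each $P_0$ carries none. Thus the number of peaks of $q$ equals the number of its nonzero blocks, which by the correspondence equals the number of $\SP$-pieces of $p$. Since $q$ has semilength $\alpha n$, the block $P_j$ has semilength $\alpha j$, and $P_0$ has semilength $0$, the indices of the nonzero blocks of $q$ are positive integers summing to $n$; if there are $n$ of them, every one equals $1$. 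Therefore $p$ contains exactly $n$ pieces from $\SP_1$ (and then these are all of its pieces) if and only if $q$ has exactly $n$ peaks.

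It remains to evaluate the formula at $k=n$. Since $B_{n,n}$ has the single argument $z_1$ with $B_{n,n}(z_1)=z_1^n$, and $s_0=1$, we get $B_{n,n}(1!s_0,2!s_1,\dots)=1$, so the count is $\binom{(\alpha-1)n+n}{n-1}\frac{(n-1)!}{n!}\,2^n=\frac{2^n}{n}\binom{\alpha n}{n-1}$; comparing the two binomials yields $\frac1n\binom{\alpha n}{n-1}=\frac{1}{(\alpha-1)n+1}\binom{\alpha n}{n}$, the asserted alternative form. The only point requiring care is the equivalence ``$n$ pieces from $\SP_1$'' $\Longleftrightarrow$ ``$n$ peaks'': one must invoke the semilength bookkeeping to exclude pieces from $\SP_j$ with $j\ge 2$, so that the two conditions genuinely coincide rather than one merely implying the other; everything else is routine, together with the standard fact $B_{n,n}(z_1)=z_1^n$.
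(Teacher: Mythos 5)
Your proposal is correct and follows essentially the same route as the paper: the corollary is the $k=n$ case of the displayed peak-count formula from \cite[Theorem~2]{BGMW}, combined (via the bijection $\xi$) with the identification of $\SP_1$-building blocks with peaks, and the evaluation $B_{n,n}(1!s_0,\dots)=(s_0)^n=1$. Your extra care in checking that peaks cannot straddle block boundaries and that $n$ peaks forces all blocks to be $P_1$ is a sound elaboration of what the paper leaves implicit.
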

Table~\ref{tab:paths_k=n} shows a few terms of the sequence \eqref{eq:n_peaks} for $\alpha=1,\dots,5$. 

\begin{table}[ht]
\small
\begin{tabular}{|c|l|c|} \hline
\rule[-2ex]{0ex}{5.2ex} Slope $\alpha$ &\hfill $\frac{2^n}{(\alpha-1)n+1} \binom{\alpha n}{n}$\hfill \ & OEIS \\[2pt] \hline
\R 1 & 2, 4, 8, 16, 32, 64, 128, 256, 512, 1024, \dots & A000079 \\ \hline
\R 2 & 2, 8, 40, 224, 1344, 8448, 54912, 366080, \dots & A151374 \\ \hline
\R 3 & 2, 12, 96, 880, 8736, 91392, 992256, 11075328, \dots & A153231 \\ \hline
\R 4 & 2, 16, 176, 2240, 31008, 453376, 6888960, 107707392, \dots & A217360 \\ \hline
\R 5 & 2, 20, 280, 4560, 80960, 1520064, 29680640, 596593920, \dots & A217364 \\ \hline
\end{tabular}
\bigskip
\caption{Number of paths in $\S_n(\alpha)$ having $n$ subpaths from $\SP_1$.}
\label{tab:paths_k=n}
\end{table}

\begin{corollary}
If $1\le k<n$, the number of paths in $\S_n(\alpha)$ that can be built with exactly $k$ elements of $\SP = \bigcup_n \SP_n$ is given by
\begin{equation*}
   2^k\binom{(\alpha-1)n +k}{k-1}\sum_{j=1}^{n-k} \frac{1}{j}\binom{n-k-1}{j-1}\binom{n+j-1}{j-1}.
\end{equation*}
\end{corollary}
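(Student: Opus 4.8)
The plan is to transport the counting problem through the bijection $\xi:\S_n(\alpha)\to\mathfrak{D}^{\pmb{\sigma}}_n(\alpha-1,1)$ constructed above, and then to feed the result into the two formulas already at hand: the specialization of \cite[Theorem~2]{BGMW} displayed just before the statement, and the closed form for $B_{n,k}(1!s_0,2!s_1,\dots)$ from Theorem~\ref{thm:BellLittleSchroeder}.

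First I would pin down what ``built with exactly $k$ elements of $\SP$'' means in terms of $q=\xi(p)$. In the algorithm defining $\xi$, every $\D$-step of $p$ contributes a block $P_1=u^{\alpha}d$ colored by the path $\D\in\SP_1$, every $\E$-step of $p$ contributes a block $P_j=u^{\alpha j}d^{j}$ colored by the Schr\"oder path $p_j\in\SP_j$ cut out between two consecutive returns of $p$ to slope-$1$ lines, and every $\N$-step contributes a lone down-step $P_0$. Thus the Schr\"oder paths from $\SP$ used to assemble $p$ are in bijection with the blocks $P_j$ ($j\ge1$) of $q$. Since each $P_j$ with $j\ge1$ ends in down-steps and $P_0$ is a single down-step, no two ascents of $q$ ever merge, so the $P_j$-blocks ($j\ge1$) are exactly the maximal ascents, i.e.\ the peaks, of $q$. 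Hence $p\in\S_n(\alpha)$ is built from exactly $k$ elements of $\SP$ if and only if $\xi(p)$ has exactly $k$ peaks.

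Next I would invoke the displayed identity preceding the statement (the case of \cite[Theorem~2]{BGMW} with $\sigma_j=2s_{j-1}$): the number of paths in $\mathfrak{D}^{\pmb{\sigma}}_n(\alpha-1,1)$ with exactly $k$ peaks equals
\[
   \binom{(\alpha-1)n+k}{k-1}\frac{(k-1)!}{n!}\,2^k\,B_{n,k}(1!s_0,2!s_1,\dots),
\]
which by the previous paragraph is also the number of paths in $\S_n(\alpha)$ built from $k$ elements of $\SP$. Finally, since $1\le k<n$, Theorem~\ref{thm:BellLittleSchroeder} applies, and substituting $B_{n,k}(1!s_0,2!s_1,\dots)=\frac{n!}{(k-1)!}\sum_{j=1}^{n-k}\frac1j\binom{n-k-1}{j-1}\binom{n+j-1}{j-1}$ makes the factors $\frac{(k-1)!}{n!}$ and $\frac{n!}{(k-1)!}$ cancel, leaving
\[
   2^k\binom{(\alpha-1)n+k}{k-1}\sum_{j=1}^{n-k}\frac1j\binom{n-k-1}{j-1}\binom{n+j-1}{j-1},
\]
as claimed. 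The only point requiring care — and the main (mild) obstacle — is the first step: making sure ``element of $\SP$'' is matched with ``peak'' of $\xi(p)$ rather than with a return to the axis or a $P_0$-block; once that identification is secured, the rest is a one-line substitution.
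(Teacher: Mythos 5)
Your proposal is correct and follows essentially the same route as the paper: the corollary is stated there as an immediate consequence of the bijection $\xi$, the displayed peak-count formula from \cite[Theorem~2]{BGMW} with $\sigma_j=2s_{j-1}$ (giving the factor $2^k$), and the closed form for $B_{n,k}(1!s_0,2!s_1,\dots)$ in Theorem~\ref{thm:BellLittleSchroeder}. Your extra care in matching ``elements of $\SP$'' with peaks of $\xi(p)$ (noting that the blocks $P_j$, $j\ge1$, cannot merge into a single ascent) is exactly the implicit bookkeeping the paper relies on.
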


Finally, for the cardinality of $\S_n(\alpha)$, we can use \cite[Theorem~2.9]{Schr07} to deduce the simple formula
\begin{equation*}
   \#\\S_n(\alpha) = \frac{1}{\alpha n+1} \sum_{\ell=0}^n \binom{\alpha n+1}{n-\ell}\binom{\alpha n+\ell}{\ell}.
\end{equation*}

\section{Ordered rooted trees}
\label{sec:SchroederTrees}

In this section, we use the interpretation ({\sc tree}) to generate and count ordered rooted trees with $n$ generators (leaves or nodes of outdegree 1).

Let $\ST_{\!n}$ be the set of ordered trees with no vertex of outdegree 1 and having $n+1$ leaves. Thus $\#\ST_{\!n} = s_n$. Note that every tree in $\ST_{\!n}$ has $n+1$ generators.

\begin{table}[ht]
\tikzstyle{tree}=[circle, draw, fill, inner sep=0pt, minimum width=2.5pt]
\def\sf{0.45}
\small
\begin{tabular}{c|c|c}
\R \quad$n=0$\quad\ & \quad$n=1$\quad\ & $n=2$ \\ \hline
\tikz[scale=\sf, baseline=0]{\node[tree] at (0,0) {};} 
& \tikz[scale=\sf, baseline=0]{\draw (0,0) node[tree]{} -- (-1,-1) node[tree]{}; \draw (0,0) -- (1,-1) node[tree]{};} 
& \;\tikz[scale=\sf+.07, baseline=0]{\draw (0,0) node[tree]{} -- (-1,-1) node[tree]{}; \draw (0,0) -- (0,-1) node[tree]{}; \draw (0,0) -- (1,-1) node[tree]{};}  
\quad
\tikz[scale=\sf, baseline=0]{\draw (0,0) node[tree]{} -- (-1,-1) node[tree]{}; \draw (0,0) -- (1,-1) node[tree]{};
\draw (-1,-1) -- (-2,-2) node[tree]{}; \draw (-1,-1) -- (0,-2) node[tree]{};} 
\quad
\tikz[scale=\sf, baseline=0]{\draw (0,0) node[tree]{} -- (-1,-1) node[tree]{}; \draw (0,0) -- (1,-1) node[tree]{};
\draw (1,-1) -- (0,-2) node[tree]{}; \draw (1,-1) -- (2,-2) node[tree]{};} \\[30pt] \hline\hline
\multicolumn{3}{c}{\R $n=3$} \\ \hline
\multicolumn{3}{c}{
\tikz[scale=\sf+.1, baseline=0]{\draw (0,0) node[tree]{} -- (-1.5,-1) node[tree]{}; \draw (0,0) -- (-0.5,-1) node[tree]{}; 
\draw (0,0) -- (0.5,-1) node[tree]{}; \draw (0,0) -- (1.5,-1) node[tree]{};}  
\quad
\tikz[scale=\sf, baseline=0]{\draw (0,0) node[tree]{} -- (-1,-1) node[tree]{}; \draw (0,0) -- (1,-1) node[tree]{};
\draw (-1,-1) node[tree]{} -- (-2,-2) node[tree]{}; \draw (-1,-1) -- (-1,-2) node[tree]{}; \draw (-1,-1) -- (0,-2) node[tree]{};} 
\quad
\tikz[scale=\sf, baseline=0]{\draw (0,0) node[tree]{} -- (-1,-1) node[tree]{}; \draw (0,0) -- (1,-1) node[tree]{};
\draw (1,-1) node[tree]{} -- (0,-2) node[tree]{}; \draw (1,-1) -- (1,-2) node[tree]{}; \draw (1,-1) -- (2,-2) node[tree]{};} 
\quad
\tikz[scale=\sf, baseline=0]{\draw (0,0) node[tree]{} -- (-1,-1) node[tree]{}; \draw (0,0) -- (0,-1) node[tree]{}; \draw (0,0) -- (1,-1) node[tree]{}; 
\draw (-1,-1) node[tree]{} -- (-2,-2) node[tree]{}; \draw (-1,-1) -- (0,-2) node[tree]{};} 
\quad
\tikz[scale=\sf, baseline=0]{\draw (0,0) node[tree]{} -- (-1,-1) node[tree]{}; \draw (0,0) -- (0,-1) node[tree]{}; \draw (0,0) -- (1,-1) node[tree]{}; 
\draw (0,-1) node[tree]{} -- (-1,-2) node[tree]{}; \draw (0,-1) -- (1,-2) node[tree]{};} 
\quad
\tikz[scale=\sf, baseline=0]{\draw (0,0) node[tree]{} -- (-1,-1) node[tree]{}; \draw (0,0) -- (0,-1) node[tree]{}; \draw (0,0) -- (1,-1) node[tree]{}; 
\draw (1,-1) node[tree]{} -- (0,-2) node[tree]{}; \draw (1,-1) -- (2,-2) node[tree]{};} 
} \\[28pt]
\multicolumn{3}{c}{
\tikz[scale=\sf, baseline=0]{\draw (0,0) node[tree]{} -- (-1.25,-1) node[tree]{}; \draw (0,0) -- (1.25,-1) node[tree]{};
\draw (-1.25,-1) -- (-2.25,-2) node[tree]{}; \draw (-1.25,-1) -- (-0.3,-2) node[tree]{};
\draw (1.25,-1) -- (0.3,-2) node[tree]{}; \draw (1.25,-1) -- (2.25,-2) node[tree]{};} 
\quad
\tikz[scale=\sf, baseline=0]{\draw (0,0) node[tree]{} -- (-1,-1) node[tree]{}; \draw (0,0) -- (1,-1) node[tree]{};
\draw (-1,-1) -- (-2,-2) node[tree]{}; \draw (-1,-1) -- (0,-2) node[tree]{};
\draw (-2,-2) node[tree]{} -- (-3,-3) node[tree]{}; \draw (-2,-2) -- (-1,-3) node[tree]{};} 
\quad
\tikz[scale=\sf, baseline=0]{\draw (0,0) node[tree]{} -- (-1,-1) node[tree]{}; \draw (0,0) -- (1,-1) node[tree]{};
\draw (-1,-1) -- (-2,-2) node[tree]{}; \draw (-1,-1) -- (0,-2) node[tree]{};
\draw (0,-2) node[tree]{} -- (-1,-3) node[tree]{}; \draw (0,-2) -- (1,-3) node[tree]{};} 
\quad
\tikz[scale=\sf, baseline=0]{\draw (0,0) node[tree]{} -- (-1,-1) node[tree]{}; \draw (0,0) -- (1,-1) node[tree]{};
\draw (1,-1) -- (0,-2) node[tree]{}; \draw (1,-1) -- (2,-2) node[tree]{};
\draw (0,-2) node[tree]{} -- (-1,-3) node[tree]{}; \draw (0,-2) -- (1,-3) node[tree]{};}
\quad
\tikz[scale=\sf, baseline=0]{\draw (0,0) node[tree]{} -- (-1,-1) node[tree]{}; \draw (0,0) -- (1,-1) node[tree]{};
\draw (1,-1) -- (0,-2) node[tree]{}; \draw (1,-1) -- (2,-2) node[tree]{};
\draw (2,-2) node[tree]{} -- (1,-3) node[tree]{}; \draw (2,-2) -- (3,-3) node[tree]{};}
} \\[42pt] \hline\hline
\end{tabular}
\bigskip
\caption{Elements of $\ST_{\!n}$ for $n=0,1,2,3$.}
\end{table}

Let $\T_n$ be the set of ordered rooted trees with $n$ generators. Our next result shows that every element of $\T = \bigcup_n \T_n$ can be uniquely constructed from elements of the set $\ST = \bigcup_n \ST_{\!n}$ via a Dyck path insertion algorithm.

\begin{theorem} \label{thm:dyck2tree}
The set of ordered rooted trees with $n$ generators is in one-to-one correspondence with the set of Dyck paths of semilength $n$ whose ascents of length $j$, for $j=1,\dots,n$, may be colored in $s_{j-1}$ different ways.
\end{theorem}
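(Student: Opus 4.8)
The plan is to exhibit an explicit bijection, the ``Dyck path insertion algorithm'' alluded to in the statement, between $\T_n$ and the set $\mathfrak{D}^{\mathbf{s}}_n$ of Dyck paths of semilength $n$ in which each maximal ascent of length $j$ carries one of $s_{j-1}$ colors, where we identify the color with an element of $\ST_{\!j-1}$ (using $\#\ST_{\!j-1}=s_{j-1}$). The underlying idea is that any ordered rooted tree decomposes into ``Schr\"oder blocks'' joined along chains of outdegree-$1$ nodes: reading the tree in depth-first (preorder) fashion, each generator of the tree produces one up-step, and the internal branching structure between consecutive generators is recorded by a colored ascent.

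First I would set up the forward map $\psi:\T_n\to\mathfrak{D}^{\mathbf{s}}_n$. Given $T\in\T_n$, traverse $T$ in preorder. Contract every maximal path of outdegree-$1$ nodes to a single edge, remembering its length; what remains after contraction is, locally around each node of outdegree $\ge 2$, a Schr\"oder-type configuration. More concretely, I would argue that $T$ is obtained from a uniquely determined tree $\widehat T\in\ST$ (the ``skeleton,'' with outdegree-$1$ nodes suppressed) by re-inserting chains of outdegree-$1$ nodes along a prescribed set of positions; the number of generators is preserved under this correspondence because suppressing an outdegree-$1$ node removes exactly one generator and one node. The Dyck path is then built recursively: a leaf contributes $ud$; a node $v$ with children subtrees $T_1,\dots,T_m$ (in order) contributes the concatenation of the colored Dyck paths of $T_1,\dots,T_m$ with an extra $u$ prepended and an extra $d$ appended, the color of the newly created ascent-block being read off from the local Schr\"oder structure at $v$; and a chain of outdegree-$1$ nodes above a subtree simply lengthens the leading ascent and augments the stored color accordingly, consistently with the identification $\#\ST_{\!j-1}=s_{j-1}$. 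One must check that this recursion indeed yields a Dyck path of semilength equal to the number of generators, and that the colored-ascent constraint (ascents of length $j$ colored in $s_{j-1}$ ways) is exactly matched.

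For the inverse $\varphi:\mathfrak{D}^{\mathbf{s}}_n\to\T_n$, I would read a colored Dyck path from left to right, using the standard bijection between Dyck paths and plane trees, but with the twist that a colored ascent of length $j$ is ``unpacked'' into the ordered tree in $\ST_{\!j-1}$ that its color encodes, with the $j$ matching down-steps distributing the continuation among the $j$ leaves of that $\ST$-block. Then one verifies $\varphi\circ\psi=\mathrm{id}$ and $\psi\circ\varphi=\mathrm{id}$ by induction on $n$ (equivalently, on semilength / number of generators), using the first-return decomposition of Dyck paths to match the recursive step of $\psi$. The main obstacle, and the part deserving the most care, is pinning down precisely how a ``Schr\"oder block'' (an element of $\ST_{\!j-1}$, i.e.\ a tree with no outdegree-$1$ vertex and $j$ leaves) corresponds to a single maximal ascent of length $j$ together with its trailing down-steps: one has to be sure the colors of ascents are in genuine bijection with $\ST$-blocks \emph{and} that stitching these local blocks together via the Dyck-path skeleton reproduces every ordered rooted tree exactly once, with outdegree-$1$ nodes accounted for without over- or under-counting. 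Once that local dictionary is fixed, the induction should be routine, and the enumeration corollary (counting trees in $\T_n$ by the number of outdegree-$1$ nodes) follows by specializing the colored-ascent statistics via Theorem~\ref{thm:BellLittleSchroeder}.
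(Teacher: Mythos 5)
Your guiding intuition (up-steps should correspond to generators, with the branching between generators recorded in the ascent colors) is the right one, but the construction you actually write down does not implement it, and the two descriptions of your forward map are inconsistent with each other and with the required statistics. The recursion ``a leaf contributes $ud$; a node $v$ with ordered subtrees $T_1,\dots,T_m$ contributes $u\,\psi(T_1)\cdots\psi(T_m)\,d$'' assigns one up-step and one down-step to \emph{every} node, so it yields a path of semilength equal to the number of nodes, not the number of generators: the tree consisting of a root with two leaf children has $2$ generators, yet your recursion outputs $uududd$ of semilength $3$. The verification you defer (``one must check that this recursion yields semilength equal to the number of generators'') therefore fails outright. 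Likewise, the clause ``a chain of outdegree-$1$ nodes above a subtree lengthens the leading ascent and augments the stored color'' is not a definition: to keep an ascent of length $j$ colored by $\ST_{j-1}$ you would have to specify how a shorter color together with a chain length determines an element of $\ST_{j-1}$, bijectively and consistently with the counts, and no such identification is given (nor does a canonical one exist); in the correct encoding an outdegree-$1$ node ends the current colored block and begins a \emph{new} maximal ascent rather than lengthening the current one.

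This exposes the real gap: the ``local dictionary'' you explicitly postpone is the entire content of the theorem, and your skeleton decomposition (suppress all outdegree-$1$ nodes to obtain a single tree $\widehat T\in\ST$ plus chain data) is not the decomposition that matches colored Dyck paths. The paper instead cuts the tree $G$ at its outdegree-$1$ nodes into an ordered sequence of blocks $G_1,\dots,G_k\in\ST$: $G_1$ is the largest subtree at the root with no outdegree-$1$ vertex, and each subsequent block is rooted at the child of an outdegree-$1$ node. A block with $j$ leaves is precisely the color of a maximal $j$-ascent; the descent following it records how many leaves are passed in the depth-first traversal before the attachment point of the next block; and since each block leaf either remains a leaf of $G$ or becomes an outdegree-$1$ node, the leaves of the blocks are exactly the generators of $G$, so the semilength is $n$ and the number of peaks exceeds the number of outdegree-$1$ nodes by one. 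With that decomposition and the leaf-counting descents, both directions of the bijection are straightforward; as written, your proposal does not yet contain a correct map in either direction.
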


\begin{proof}
Let $\mathfrak{D}^{\mathbf{s}}_n(1,0)$ be the set of Dyck paths of semilength $n$ colored by the little Schr\"oder numbers.  Because of ({\sc tree}), we can consider $\mathfrak{D}^{\mathbf{s}}_n(1,0)$ to be the set of Dyck paths of semilength $n$ with each $j$-ascent associated with a tree in $\ST_{\!j-1}$. 

We will define a map $\psi: \mathfrak{D}^{\mathbf{s}}_n(1,0) \to \T_n$ that requires a notion of traversing an ordered tree. When traversing a tree, we stick to the ordering of leaf nodes that would be acquired when performing inorder depth-first search.
    
Suppose we have a path $P\in\mathfrak{D}^{\mathbf{s}}_n(1,0)$ with $k$ peaks. For $1\leq i \leq k$, let $G_i$ denote the ordered tree associated with the $i$th ascent of the Dyck path. Starting at the first ascent of $P$, we construct the ordered tree $\psi(P)$ as follows. We start at the root of the tree $G_1$. Suppose that the first ascent of the Dyck path is followed by a $j_1$-descent. Then, we traverse $G_1$ from the root until we reach the $j_1$th leaf. At that point, we add an edge between the $j_1$th leaf of $G_1$ and the root of the tree $G_2$ corresponding to the second ascent. Suppose that the second ascent is followed by a $j_2$-descent. From the node that was formerly the $j_1$th leaf from the root (and which is no longer a leaf), we continue to traverse the augmented tree until we reach the $j_2$th leaf from that node.
    
This algorithm will always result in an ordered tree with $n$ generators. Note that since $P$ has $n$ up steps, and since every $j$-ascent corresponds to an ordered tree with $j$ leaves (and no other generators), there are $n$ leaves total in the trees $G_1,\dots,G_k$. Every leaf in a tree $G_i$ either becomes a leaf in the final tree, or it becomes a node with a single child. Thus the final tree $\psi(P)$ has exactly $n$ generators. 

An example that illustrates the above construction is given in Figure~\ref{fig:dyck2tree}.

\begin{figure}[ht]
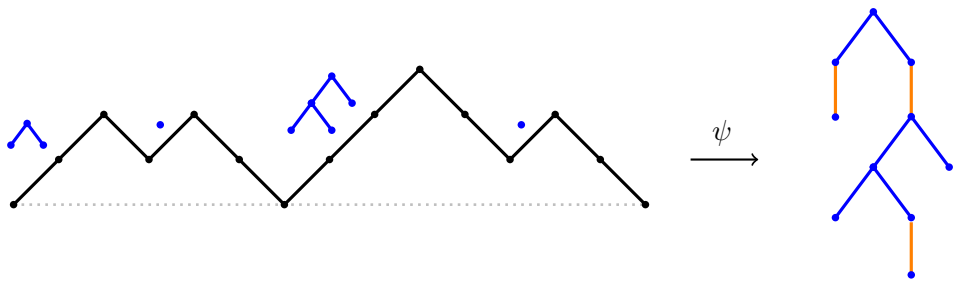

\tikzstyle{dot}=[circle, inner sep=1, fill]
\tikz[scale=0.6,baseline=-28pt]{
 \draw[pathlight] (0,0) -- (14,0);
 \draw[path] (0,0) -- (2,2) -- (3,1) -- (4,2) -- (6, 0) -- (9,3) -- (11,1) -- (12,2) -- (14,0);
 \foreach \i in {0,1,2} {\node[dot] at (\i,\i) {};}
 \foreach \i/\j in {3/1,4/2,5/1,6/0,7/1,8/2,9/3,10/2,11/1,12/2,13/1,14/0} {\node[dot] at (\i,\j) {};}
 \drawbranch{9.4}{3.8}{0.75}
 \drawbranch{8.8}{3}{0.75}
 \drawbranch{0.5}{3}{0.6}
 \draw[above=5pt,blue] (3.25,1.5) node[dot]{}; 
 \draw[above=5pt,blue] (11.25,1.5) node[dot]{}; 
 \draw[->,thick] (15,1) -- (16.5,1);
 \node[above=1pt] at (15.7,1) {\small $\psi$};}
\qquad
\tikz[scale=0.7]{
 \draw[very thick,orange] (-.72,-1) -- (-.72,-2);
 \draw[very thick,orange] (.72,-1) -- (.72,-2);
 \draw[very thick,orange] (.72,-4) -- (.72,-5);
 \drawbranch{0}{0}{1.2};
 \draw (-.72,-2) node[dot,blue]{};
 \drawbranch{0.6}{-1.66}{1.2};
 \drawbranch{0}{-2.46}{1.2};
 \draw (.72,-5) node[dot,blue]{};
}
\caption{Example for $n=7$.}
\label{fig:dyck2tree}
\end{figure}

The inverse map from $\T_n$ to $\mathfrak{D}^{\mathbf{s}}_n(1,0)$ is straightforward. Given $G\in \T_n$, we construct a Schr\"oder colored Dyck path as follows. Starting at the root of $G$, we first identify the largest subgraph of $G$ with same root that is an ordered tree with no vertex of outdegree 1. We call this subgraph $G_1$. Once $G_1$ has been specified, we count the number of leaves of $G_1$.  If $G_1$ has $j$ leaves, then we begin the Dyck path with a $j$-ascent and label that first ascent with $G_1$. Then, we traverse $G_1$ until we find a leaf of $G_1$ that corresponds to a node of outdegree 1 in $G$. If this leaf is the $j_1$th leaf checked, then we add a $j_1$-descent to the Dyck path. Then, to find the next ascent, we identify the ordered tree $G_2$ rooted at the child of the node corresponding to the $j_1$th leaf of $G_1$. This process continues until the tree $G$ has been completely traversed.
    
This algorithm gives a Dyck path in $\mathfrak{D}^{\mathbf{s}}_n(1,0)$. Indeed, every generator in $G$ is counted as both an ascent and a descent, so the path $\psi^{-1}(G)$ has semilength $n$. Moreover, every $j$-ascent corresponds to one of the $s_{j-1}$ possible ordered trees with $j$ leaves and no nodes of outdegree 1. Finally, note that the ascent corresponding to a generator of the ordered tree is always added to the path before the descent corresponding to the same generator is added to the path.
\end{proof}

\begin{corollary}
As a consequence of \cite[Theorem~2]{BGMW} and Theorem~\ref{thm:dyck2tree}, we get
\begin{equation*}
   \#\T_n = \#\mathfrak{D}^{\mathbf{s}}_n(1,0) = 1 + \sum_{k=1}^{n-1}\sum_{j=1}^{n-k}\frac{1}{j} \binom{n}{k-1}\binom{n-k-1}{j-1}\binom{n+j-1}{j-1}.
\end{equation*}
This gives the sequence $1, 2, 7, 32, 166, 926, 5419, 32816,\dots$, see \cite[A108524]{oeis}.

Moreover, the number of trees in $\T_n$ having exactly $k-1$ nodes of outdegree 1 is equal to the number of paths in $\mathfrak{D}^{\mathbf{s}}_n(1,0)$ having $k$ peaks. This number is given by
\begin{equation*}
   \binom{n}{k-1} \frac{(k-1)!}{n!} B_{n,k}(1!s_0,2!s_1,\dots),
\end{equation*}
which for $1\le k<n$ can be written as
\begin{equation*}
   \binom{n}{k-1} \sum_{j=1}^{n-k}\frac{1}{j}\binom{n-k-1}{j-1}\binom{n+j-1}{j-1}.
\end{equation*}
\end{corollary}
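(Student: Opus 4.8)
The plan is to stitch together three facts already established above: the bijection $\psi$ of Theorem~\ref{thm:dyck2tree}, the peak-refined enumeration of colored Dyck paths in \cite[Theorem~2]{BGMW}, and the closed form \eqref{eq:schroederBell} for $B_{n,k}(1!s_0,2!s_1,\dots)$. The coarse identity $\#\T_n=\#\mathfrak{D}^{\mathbf{s}}_n(1,0)$ is exactly Theorem~\ref{thm:dyck2tree}, so everything reduces to enumerating $\mathfrak{D}^{\mathbf{s}}_n(1,0)$, both in total and by number of peaks.

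To refine the bijection by the number of outdegree-$1$ vertices, I would track what a peak becomes under $\psi$. A path $P\in\mathfrak{D}^{\mathbf{s}}_n(1,0)$ with $k$ peaks has $k$ maximal ascents, hence is decorated by trees $G_1,\dots,G_k\in\ST$, each of which has only leaves and branch vertices (outdegree $\ge 2$). In building $\psi(P)$, the root of each $G_{i+1}$ gets attached to a leaf of the tree assembled so far; that leaf becomes a vertex with a single child and remains so, while no other vertex changes its outdegree. Hence $\psi(P)$ has exactly $k-1$ vertices of outdegree $1$, and the inverse map of Theorem~\ref{thm:dyck2tree} shows this is a bijection fiber by fiber. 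So the trees in $\T_n$ with $k-1$ vertices of outdegree $1$ are equinumerous with the paths in $\mathfrak{D}^{\mathbf{s}}_n(1,0)$ having $k$ peaks.

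Next I would apply \cite[Theorem~2]{BGMW} to the class $\mathfrak{D}^{\mathbf{s}}_n(1,0)$: the number of such paths with $k$ peaks equals $\binom{n}{k-1}\frac{(k-1)!}{n!}B_{n,k}(1!s_0,2!s_1,\dots)$, the first displayed expression in the corollary. For $1\le k<n$, inserting the closed form \eqref{eq:schroederBell} makes $n!/(k-1)!$ cancel against the prefactor $(k-1)!/n!$, leaving $\binom{n}{k-1}\sum_{j=1}^{n-k}\frac1j\binom{n-k-1}{j-1}\binom{n+j-1}{j-1}$, the binomial-only expression claimed.

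Finally, $\#\mathfrak{D}^{\mathbf{s}}_n(1,0)$ is obtained by summing the peak counts over $k=1,\dots,n$. The terms with $1\le k\le n-1$ reproduce the double sum in the statement, and the top term $k=n$ must be handled separately, since \eqref{eq:schroederBell} is only valid for $k<n$: by the basic identity $B_{n,n}(z_1,z_2,\dots)=z_1^n$ we get $B_{n,n}(1!s_0,2!s_1,\dots)=1$, so that term equals $\binom{n}{n-1}\frac{(n-1)!}{n!}=1$, contributing the leading summand. Evaluating for small $n$ yields $1,2,7,32,166,\dots$, the sequence \cite[A108524]{oeis}. I expect the only delicate points to be the bookkeeping in the second paragraph---confirming that $k$ peaks create precisely $k-1$ outdegree-$1$ vertices, with nothing miscounted---and remembering that $k=n$ lies outside the scope of \eqref{eq:schroederBell}, so the top term needs the separate evaluation $B_{n,n}(1!s_0,\dots)=1$.
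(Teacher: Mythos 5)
Your proposal is correct and follows essentially the same route the paper intends: Theorem~\ref{thm:dyck2tree} for the bijection, \cite[Theorem~2]{BGMW} for the peak-refined count, and \eqref{eq:schroederBell} for the closed form. You supply two details the paper leaves implicit---that $k$ peaks correspond to exactly $k-1$ outdegree-one vertices under $\psi$, and that the $k=n$ term is evaluated separately via $B_{n,n}(1!s_0,\dots)=1$ to give the leading $1$---both handled correctly.
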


\section{Outerplanar maps}
\label{sec:OuterplanarMaps}

Recall that if $\mathbf{s}$ is the sequence $(s_n)_{n\in\mathbb{N}_0}$, $\mathfrak{D}^{\mathbf{s}}_n(1,1)$ denotes the set of Dyck paths of semilength $2n$ created from down-steps and blocks of the form $P_j=``u^{2j}d^{j}"$ for $j=1,\ldots,n$, such that each $2j$-ascent may be colored in $s_{j-1}$ different ways.

In this section, we use the interpretation ({\sc poly}) for the little Schr\"oder numbers to generate and count simple outerplanar maps using polygon dissections (seen as biconnected outerplanar maps) put together according to the elements of $\mathfrak{D}^{\mathbf{s}}_n(1,1)$.

Let $\B_n$ be the set of biconnected (nonseparable) rooted outerplanar maps with $n+2$ vertices. Note that $\B_0$ consists of the single map \tikz{\draw[poly] (0,0) node[v]{} -- (0.7,0) node[v]{};}, and for $n\ge1$, every element of $\B_n$ can be seen as a dissection of a convex $(n+2)$-gon by nonintersecting diagonals. Therefore, $\#\B_n = s_{n}$.
For example, $\B_1$ consists of a single triangle and $\B_2$ contains the three possible dissections of a square:

\medskip
\begin{center}
\tikz{\node[poly, regular polygon sides=3, minimum size=40] at (0,0) (A) {};
\node[regular polygon, regular polygon sides=3, minimum size=39] at (0,0) (B) {};
\foreach \i in {1,...,3}{\node[v] at (B.corner \i) {};}}
\quad\;
\tikz{\draw[dotted,thick,gray] (0,0) -- (0,1.25)}
\quad\;
\tikz{\node[poly, regular polygon sides=4, minimum size=40] at (0,0) (A) {};
\node[regular polygon, regular polygon sides=4, minimum size=39] at (0,0) (B) {};
\foreach \i in {1,...,4}{\node[v] at (B.corner \i) {};}}
\quad
\tikz{\node[poly, regular polygon sides=4, minimum size=40] at (0,0) (A) {};
\node[regular polygon, regular polygon sides=4, minimum size=39] at (0,0) (B) {};
\draw[poly] (B.corner 1) -- (B.corner 3);
\foreach \i in {1,...,4}{\node[v] at (B.corner \i) {};}}
\quad
\tikz{\node[poly, regular polygon sides=4, minimum size=40] at (0,0) (A) {};
\node[regular polygon, regular polygon sides=4, minimum size=39] at (0,0) (B) {};
\draw[poly] (B.corner 2) -- (B.corner 4);
\foreach \i in {1,...,4}{\node[v] at (B.corner \i) {};}}
\end{center} 

By default, we will assume that the elements of $\B_n$ are rooted at the base of the polygon such that the root face is the outer face.

\begin{theorem} \label{thm:map2dyck}
The set $\M_n$ of simple outerplanar maps with $n+1$ vertices is in one-to-one correspondence with the set $\mathfrak{D}^{\mathbf{s}}_n(1,1)$. 
\end{theorem}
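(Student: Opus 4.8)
The plan is to prove Theorem~\ref{thm:map2dyck} by exhibiting an explicit bijection $\psi\colon\mathfrak{D}^{\mathbf{s}}_n(1,1)\to\M_n$, following the same ``Dyck path insertion'' philosophy as in the proof of Theorem~\ref{thm:dyck2tree}, but now inserting dissected polygons at cut vertices instead of trees at leaves. The structural backbone is the \emph{block decomposition}: every connected graph, hence every simple outerplanar map, decomposes uniquely into its blocks (maximal biconnected subgraphs), which are glued along cut vertices and which, together with those cut vertices, form a tree --- the block-cut tree. For an outerplanar map each block is again a biconnected outerplanar map, i.e.\ an element of $\B_m$ for some $m\ge 0$, which by ({\sc poly}) is a single edge when $m=0$ and a dissection of a convex $(m+2)$-gon when $m\ge 1$. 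So $\M_n$ is exactly the family of ``rooted trees of dissected polygons,'' and the task is to encode such a tree by a colored Dyck path.

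Before describing $\psi$ I would check that the bookkeeping matches. If $M\in\M_n$ has blocks $B_1,\dots,B_k$ with $B_i\in\B_{m_i}$, then a short count on the block-cut tree (a cut vertex lying in $d$ blocks is counted $d-1$ times too often, and $\sum(d-1)=k-1$ summed over all cut vertices) gives $\#V(M)=\sum_i(m_i+2)-(k-1)=\sum_i(m_i+1)+1$. Writing $j_i=m_i+1\ge1$, an element of $\M_n$ therefore satisfies $j_1+\dots+j_k=n$; on the other side, a path in $\mathfrak{D}^{\mathbf{s}}_n(1,1)$ assembled from blocks $P_{j_1},\dots,P_{j_k}$ and $n$ standalone down-steps has $2(j_1+\dots+j_k)=2n$ up-steps, hence again $j_1+\dots+j_k=n$. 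Since $\#\B_{j-1}=s_{j-1}$ is precisely the number of colors available for the $2j$-ascent of $P_j$, a colored path records an ordered list of blocks $B_i\in\B_{j_i-1}$ together with additional data (the down-step pattern) that must encode how these blocks are glued.

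The map $\psi$ is then a depth-first traversal. Given $P\in\mathfrak{D}^{\mathbf{s}}_n(1,1)$, read off its canonical decomposition into blocks $P_{j_1},\dots,P_{j_k}$ and standalone down-steps from left to right, and let $B_i\in\B_{j_i-1}$ be the dissected $(j_i+1)$-gon coloring the ascent of $P_{j_i}$. Draw $B_1$ with a distinguished entry vertex on its base edge and walk its boundary, visiting the remaining $j_1$ vertices in order; the down-steps (the $j_1$ block-down-steps of $P_{j_1}$ interleaved with standalone down-steps) tell us, at each visited vertex, whether to attach $B_2$ there --- in which case we recurse into $B_2$ --- or to move on, and the nesting of the down-steps reproduces exactly both the parent-child relation of the block-cut tree and the choice of attachment vertex inside each parent polygon. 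The inverse runs the same traversal backwards: root $M$ at a vertex (fixing a convention for which incident block is visited first, via the cyclic order of edges around the root), take that block as $B_1$, emit the colored block $P_{j_1}$ with $j_1=\#V(B_1)-1$ and color $B_1$, traverse the boundary of $B_1$ emitting a down-step at each vertex that carries no further unprocessed block and recursing (emitting the next colored block) at each cut vertex that still does; iterating until $M$ is exhausted yields a path in $\mathfrak{D}^{\mathbf{s}}_n(1,1)$, and $\psi$ and this map undo one another because both are driven by a single traversal.

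The delicate point, which I expect to be the main obstacle, is the factor of $2$ in the ascent $u^{2j}$: a block with $j+1$ vertices offers only $j$ candidate attachment vertices yet must be recorded with $2j$ up-steps. The traversal passes each of those $j$ vertices, and the encoding has to distinguish ``a vertex of $B_i$ that does not receive a block'' from ``a vertex of $B_i$ at which we descend'' in a way that is simultaneously injective, surjective, and insensitive to any ambiguity in how a Dyck word is parsed into blocks; turning the informal description above into a precise, invertible rule is where the real work lies. One must also check that no hidden rooted-map automorphisms corrupt the count --- this is the reason rooting should be at the right object (an oriented base edge, equivalently a root corner of the outer face), so that $\M_n$ is a set of rigid structures. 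As sanity checks, both sides give $1,1,3$ for $n=0,1,2$, and the cardinality of $\mathfrak{D}^{\mathbf{s}}_n(1,1)$ supplied by \cite[Theorem~2]{BGMW} can be compared directly.
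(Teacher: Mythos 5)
You have the right skeleton, and it is the paper's: decompose $M\in\M_n$ into its blocks, identify each block having $j+1$ vertices with an element of $\B_{j-1}$ coloring a $2j$-ascent, and let the down-steps of the word record a traversal that encodes how the blocks are glued; your bookkeeping $\sum_i(m_i+1)=n$ matching $j_1+\dots+j_k=n$ is also correct. But the proposal stops exactly where the proof has to begin: you never state the rule by which the down-steps determine the attachment vertices, and you say yourself that ``turning the informal description above into a precise, invertible rule is where the real work lies.'' That rule is the entire content of the theorem, so what you have is a plan rather than a proof; the sentence about ``the nesting of the down-steps reproduces the parent--child relation and the choice of attachment vertex'' asserts the conclusion instead of constructing the map, and your unresolved worry about the factor of $2$ in $u^{2j}d^{\,j}$ shows the encoding is genuinely not pinned down.

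Here is the missing piece, as the paper supplies it. Walk counterclockwise around $M$ starting at the root vertex $v_0$; when the walk first enters a new block $B$ with $i+1$ vertices, emit the whole string $u^{2i}d^{\,i}$ colored by $B$, and at every other vertex-visit (including repeat visits to vertices already seen) emit a single down-step, stopping at the final return to $v_0$. Each vertex is visited once per block containing it, so there are $n+k$ visits, of which $k$ emit ascents and $n$ emit the standalone down-steps; moreover, after each emission the height of the word equals the number of vertex-visits (final return to $v_0$ excluded) still to come in the boundary walk of the partial map built so far, so the word is indeed an element of $\mathfrak{D}^{\mathbf{s}}_n(1,1)$. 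The inverse is then a pure counting rule, with none of the case analysis you were worried about: if the ascent of a block with $i+1$ vertices is followed by $j\ge i$ down-steps before the next ascent, attach the next block at the $(j-i+1)$th vertex reached when continuing the walk of the current partial map. This also dissolves the factor-of-$2$ issue: because of the forced $d^{\,i}$ right after the ascent, the net height contribution of a block is $+i$, exactly the number of new vertices (new places the walk has yet to pass), so the doubled ascent carries no extra combinatorial information---it is only what makes the word conform to the definition of $\mathfrak{D}^{\mathbf{s}}_n(1,1)$ and lets the count from \cite[Theorem~2]{BGMW} apply. Your remark about rooting is consistent with the paper (the maps in $\M_n$ are rooted), but without an explicit emission/decoding rule such as the above, your bijection is not yet defined.
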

\begin{proof}
We will establish a bijective map $\phi:\M_n \to \mathfrak{D}^{\mathbf{s}}_n(1,1)$ that relies on the decomposition of a planar map into biconnected components. Given a map $M\in\M_n$ with $k$ biconnected components, we construct a Dyck path $P=\phi(M)$ as follows. Starting at the root vertex $v_0$ of $M$, we walk counterclockwise around $M$, visiting all of its vertices. We let $B_1,\dots,B_k$ be the biconnected components of $M$, numbered in the order they are visited, and assume that $B_j$ has $i_j+1$ vertices, $1\le i_j\le n$ for every $j$. While at $v_0$, add a $2i_1$-ascent to $P$ followed by $i_1$ down-steps and walk to the next vertex of $M$ (second vertex of the root edge). For each subsequent vertex we visit on $B_1$, add a down-step to $P$ until we reach the first vertex on $B_2$. At that point, we add a $2i_2$-ascent to $P$ followed by $i_2$ down-steps, and move to the next vertex. We then continue adding down-steps to $P$ until we reach the first vertex of the next component of $M$. When visiting a vertex of a biconnected component that has already been visited, we add a down-step to $P$. This process ends after we have visited all vertices of $M$ and reach $v_0$ again. We then label/color the $k$ maximal ascents of $P$ with the corresponding biconnected maps $B_1,\dots,B_k$, and obtain an element of $\mathfrak{D}^{\mathbf{s}}_n(1,1)$ with $k$ peaks. An example of the above construction is shown in Figure~\ref{fig:map2dyck}.

\begin{figure}[ht]
 \tikz[scale=0.5]{
 \node[poly, regular polygon sides=4, minimum size=40] at (0,0) (A) {};
 \node[regular polygon, regular polygon sides=4, minimum size=39] at (0,0) (B) {};
 \draw[poly] (B.corner 1) -- (B.corner 3);
 \foreach \i in {1,...,4}{\node[v] at (B.corner \i) {};}
 \draw[poly] (B.corner 4) node[v]{} -- +(0,-56pt) node[v]{};
 \draw[poly] (B.corner 2) node[v]{} -- +(-30pt,50pt) node[v]{} -- +(-60pt,0pt) node[v]{} -- cycle;
 \draw[-stealth, very thick] (B.corner 3)+(27pt,0)  -- +(35pt,0);
 \draw[->, thick] (2,0) -- (3.5,0);
 \node[above=1pt] at (2.7,0) {\small $\phi$};
}
\tikzstyle{dot}=[circle, inner sep=1, fill=black]
\quad \tikz[scale=0.4,baseline=-10pt]{
 \draw[pathlight] (0,0) -- (24,0);
 \draw[path] (0,0) -- (6,6) -- (9,3) -- (11,5) -- (15, 1) -- (19,5) -- (24,0);
 \foreach \i in {0,...,6} {\node[dot] at (\i,\i) {};}
 \foreach \i/\j in {7/5,8/4,9/3,10/4,11/5,12/4,13/3,14/2,15/1,16/2,17/3,18/4,19/5,20/4,21/3,22/2,23/1,24/0} 
 {\node[dot] at (\i,\j) {};}
 \node[above=15pt] at (1.5,0.5) 
 {\tikz{
 \node[minipoly, regular polygon sides=4, minimum size=27] at (0,0) (A) {};
 \node[regular polygon, regular polygon sides=4, minimum size=26] at (0,-1pt) (B) {};
 \draw[minipoly] (A.corner 1) -- (A.corner 3);
 \foreach \i in {1,...,4}{\node[v0] at (B.corner \i) {};}
 }};
 \node[above=15pt] at (9.2,2) 
 {\tikz{\draw[minipoly] (0,0) -- (0.5,0); \node[v0] at (0,-0.05){}; \node[v0] at (0.5,-0.05){};}};
 \node[above=15pt] at (16,1) 
 {\tikz{
 \node[minipoly, regular polygon sides=3, minimum size=23] at (0,0) (A) {};
 \node[regular polygon, regular polygon sides=3, minimum size=22] at (0,-1pt) (B) {};
 \foreach \i in {1,...,3}{\node[v0] at (B.corner \i) {};}
 }}
}
\caption{Example for $n=6$ and $k=3$.}
\label{fig:map2dyck}
\end{figure}

The inverse map is clear. Given a Dyck path $P\in \mathfrak{D}^{\mathbf{s}}_n(1,1)$ with $k$ maximal ascents of lengths $2i_1,\dots,2i_k$, colored by biconnected rooted maps $B_1,\dots,B_k$, we construct an outerplanar map as follows. Let $M_1=B_1$ and let $v_0$ be the root vertex of $B_1$. If the first ascent of $P$ is followed by $j_1$ down-steps, we walk around $M_1$ (starting at $v_0$) until we reach the $(j_1-i_1+1)$th vertex on $M_1$. We call that vertex $v_1$ and merge it with the root vertex of $B_2$ to create a rooted outerplanar map $M_2$ with the same root edge as $M_1$. This extended map has now $i_1+ i_2 +1$ vertices. We then repeat the process starting now at $v_1$. After all coloring biconnected maps have been used as extensions (traversing $P$ from left to right), the resulting rooted outerplanar map $M_k$ will be an element of $\M_n$. We let $\phi^{-1}(P)=M_k$.
\end{proof}

\begin{corollary}
By Theorem~\ref{thm:map2dyck}, the number of outerplanar maps in $\M_n$ having exactly $k$ biconnected components is equal to the number of paths in $\mathfrak{D}^{\mathbf{s}}_n(1,1)$ having $k$ peaks. Thus, by \cite[Theorem~2]{BGMW}, this number is given by
\begin{equation}\label{eq:biconn_components}
   \binom{n+k}{k-1} \frac{(k-1)!}{n!} B_{n,k}(1!s_0,2!s_1,\dots),
\end{equation}
which for $1\le k<n$ can be written as
\begin{equation*}
   \binom{n+k}{k-1} \sum_{j=1}^{n-k}\frac{1}{j}\binom{n-k-1}{j-1}\binom{n+j-1}{j-1}.
\end{equation*}
Note that if $k=n$, then \eqref{eq:biconn_components} becomes $\frac{1}{n}\binom{2n}{n-1} = C_n$, and we recover the known fact that planted trees are counted by the Catalan numbers.
\end{corollary}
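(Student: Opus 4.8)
The plan is to deduce the corollary entirely from three ingredients already at hand: the bijection $\phi\colon\M_n\to\mathfrak{D}^{\mathbf{s}}_n(1,1)$ constructed in the proof of Theorem~\ref{thm:map2dyck}, the peak-enumeration of colored Dyck paths in \cite[Theorem~2]{BGMW}, and the Bell-polynomial evaluation of Theorem~\ref{thm:BellLittleSchroeder}. The first step is to check that $\phi$ carries the statistic ``number of biconnected components of $M$'' to the statistic ``number of peaks of $\phi(M)$'': this is built into the construction, since while walking around $M$ each biconnected component $B_j$ (with $i_j+1$ vertices) contributes exactly one maximal ascent $u^{2i_j}$ to $\phi(M)$, and conversely each maximal ascent of a path in $\mathfrak{D}^{\mathbf{s}}_n(1,1)$ yields, under $\phi^{-1}$, exactly one biconnected component; as the maximal ascents of a path assembled from the blocks $P_j=u^{2j}d^j$ correspond bijectively to its peaks, maps in $\M_n$ with $k$ biconnected components are in bijection with paths in $\mathfrak{D}^{\mathbf{s}}_n(1,1)$ with $k$ peaks. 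Applying \cite[Theorem~2]{BGMW} to $\mathfrak{D}^{\mathbf{c}}_n(a,1)$ with $a=1$ and color sequence $\mathbf{c}=(s_0,s_1,s_2,\dots)$ then yields \eqref{eq:biconn_components}.

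For the closed form in the range $1\le k<n$, I would substitute the value of $B_{n,k}(1!s_0,2!s_1,3!s_2,\dots)$ from Theorem~\ref{thm:BellLittleSchroeder} into \eqref{eq:biconn_components}; the factor $\tfrac{(k-1)!}{n!}$ cancels the $\tfrac{n!}{(k-1)!}$ produced by that theorem, leaving $\binom{n+k}{k-1}\sum_{j=1}^{n-k}\tfrac1j\binom{n-k-1}{j-1}\binom{n+j-1}{j-1}$. For the remaining case $k=n$, I would use the standard fact $B_{n,n}(z_1,z_2,\dots)=z_1^n$ (the only multi-index in $\pi(n,n)$ is $(n,0,0,\dots)$), so that $B_{n,n}(1!s_0,2!s_1,\dots)=(1\cdot s_0)^n=1$ and \eqref{eq:biconn_components} collapses to $\binom{2n}{n-1}\tfrac{(n-1)!}{n!}=\tfrac1n\binom{2n}{n-1}=C_n$. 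To justify the closing remark, I would note that $k=n$ together with $i_1+\dots+i_k=n$ forces every $i_j=1$, i.e.\ every biconnected component is a single edge (the unique element of $\B_0$), and a rooted outerplanar map all of whose blocks are single edges is exactly a planted plane tree with $n$ edges, which are counted by $C_n$.

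Since nearly all of the work is outsourced to results already established, there is no deep obstacle; the one point that genuinely needs care is the first step, namely verifying that $\phi$ matches the component count with the peak count so that \cite[Theorem~2]{BGMW} applies verbatim. The subsequent derivation of the two displayed formulas and of the Catalan specialization is then routine bookkeeping with the cited identities.
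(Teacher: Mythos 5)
Your proposal is correct and follows essentially the same route the paper takes: transport the component statistic to the peak statistic via $\phi$, apply \cite[Theorem~2]{BGMW} with $a=b=1$ and colors $c_j=s_{j-1}$, substitute Theorem~\ref{thm:BellLittleSchroeder} for $1\le k<n$, and use $B_{n,n}(1!s_0,\dots)=1$ to get the Catalan specialization at $k=n$. The added justification that $k=n$ forces every block to be a single edge (hence a planted tree) is a welcome, if minor, elaboration of the paper's closing remark.
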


\begin{corollary}
For $n\ge 1$ we have
\begin{equation*}
   \#\M_n = \frac{1}{n}\binom{2n}{n-1} + \sum_{k=1}^{n-1}\sum_{j=1}^{n-k}\frac{1}{j} \binom{n+k}{k-1}\binom{n-k-1}{j-1}\binom{n+j-1}{j-1}.
\end{equation*}
This gives the sequence $1, 3, 13, 67, 381, 2307, 14589, 95235,\dots$, see \cite[A064062]{oeis}.
 \end{corollary}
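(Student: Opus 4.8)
The plan is to obtain $\#\M_n$ simply by summing the component-refined count from the previous corollary over all admissible numbers $k$ of biconnected components.

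First I would record that, under the bijection $\phi$ of Theorem~\ref{thm:map2dyck}, a map $M\in\M_n$ with biconnected components $B_1,\dots,B_k$ corresponds to a path in $\mathfrak{D}^{\mathbf{s}}_n(1,1)$ whose maximal ascents have lengths $2i_1,\dots,2i_k$, where $B_j$ has $i_j+1$ vertices. Since $M$ has $n+1$ vertices, this forces $i_1+\dots+i_k=n$ with each $i_j\ge 1$, hence $1\le k\le n$. Consequently the subsets $\M_n^{(k)}$ of maps with exactly $k$ biconnected components partition $\M_n$, and therefore $\#\M_n=\sum_{k=1}^{n}\#\M_n^{(k)}$.

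Next I would split off the top term $k=n$. By the previous corollary, $\#\M_n^{(n)}$ equals the value of \eqref{eq:biconn_components} at $k=n$, which is $\tfrac1n\binom{2n}{n-1}=C_n$ (combinatorially, the maps all of whose blocks are single edges, i.e.\ planted trees with $n$ edges), while for $1\le k<n$ the same corollary gives $\#\M_n^{(k)}=\binom{n+k}{k-1}\sum_{j=1}^{n-k}\frac1j\binom{n-k-1}{j-1}\binom{n+j-1}{j-1}$. Summing the $n$ contributions yields precisely the displayed identity, and the initial values $1,3,13,67,381,\dots$ follow by direct evaluation.

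The summation itself presents no real obstacle; the only point that requires care is the identification of the resulting sequence with \cite[A064062]{oeis}. For that I would either check enough initial terms against the OEIS entry, or convert the recursive block-decomposition underlying Theorem~\ref{thm:map2dyck} into a functional equation for the ordinary generating function $M(x)=\sum_{n\ge0}\#\M_n\,x^n$ expressed through the generating function of the little Schr\"oder numbers, and compare it with the generating function recorded there. The latter route is the delicate part, since a block of ``size'' $j$ carries $s_{j-1}$ colors, so the index shift and the substitution variable (whether one plugs in $x$ or $xM(x)$) must be tracked precisely.
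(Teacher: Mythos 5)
Your proposal is correct and matches the paper's (implicit) argument exactly: sum the refined count \eqref{eq:biconn_components} over $k=1,\dots,n$, using the double-sum form for $1\le k<n$ and the value $\frac1n\binom{2n}{n-1}=C_n$ for $k=n$. The extra discussion about verifying A064062 via generating functions goes beyond what the paper does (it simply cites the OEIS entry after checking initial terms), but it is a reasonable supplement rather than a gap.
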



\end{document}